\documentclass[preprint,12pt]{elsarticle}

\usepackage{amssymb}
\usepackage{amsmath,amssymb,amsfonts}
\usepackage{tikz}

\usetikzlibrary{decorations.pathmorphing, patterns, decorations.markings}

\newtheorem{thm}{Theorem}
\newtheorem{Cor}{Corollary}
\newtheorem{Lem}{Lemma}
\newdefinition{rmk}{Remark}
\newproof{proof}{Proof}
\journal{Mechanical Systems and Signal Processing}

\begin{document}

% Last update 2 May, 2026

\begin{frontmatter}

\title{Simultaneous determination of an unknown bending moment and shear force
in the Euler–Bernoulli cantilever beam from measured boundary deflection and slope}

\author[1]{Alemdar Hasanov\fnref{fn1}}
\ead{alemdar.hasanoglu@gmail.com}
\author[2]{Onur Baysal\fnref{fn2}\corref{cor1}
\fnref{fn2}}
\ead{onur.baysal@um.edu.mt}
\cortext[cor1]{Corresponding author}
\fntext[fn1]{Emeritus Professor. \c{S}ehit Ekrem Dsitrict, Altun\c{s}ehir Str., Ayazma Villalari, No: 22. Bah\c{c}ecik - Ba\c{s}iskele, Kocaeli, 41030}
\fntext[fn2]{Senior Lecturer, Department of Mathematics, University of Malta, Malta}
\address[1]{Kocaeli University, 41001, \.{I}zmit/Kocaeli, T\"{u}rkiye}
\address[2]{University of Malta, Msida, Malta}

\begin{abstract}
It is known that the study of vibration characteristics at the tip of the micro-cantilever and its relationship to the sample plays a very important role in improving the resolution of an Atomic Force Microscopy (AFM). In this paper, within the Euler-Bernoulli beam structure, a mathematical model, defined as \emph{a model with two unknown inputs and two measured outputs}, is considered for the simultaneous determination of the unknown bending moment and the shear force at the tip of the micro-cantilever from two feasible measured outputs at the same tip: the deflection and the slope. This model leads to the following inverse problem: find $M(t)$ and $g(t)$ in $\rho_A(x)u_{tt}+\mu(x) u_{t}+(r(x)u_{xx})_{xx}=0$,
$(x,t)\in \Omega_T:=(0,\ell)\times (0,T)$ subject to the boundary conditions $u(0,t)=u_{x}(0,t)=0$, $\left(r(x)u_{xx}\right)_{x=\ell}=M(t)$, $\left((r(x)u_{xx})_x\right)_{x=\ell}=g(t)$, and the homogenous initial conditions $u(x,0)=u_{t}(x,0)=0$, from the measured outputs  $w_{\ell}(t):=u(\ell,t)$ and $\theta_{\ell}(t):=u_x(\ell,t)$.   It is proved that the vector-form input-output map $\mathcal{P}:=\left (\Phi, \Psi \right )$, with $\left (\Phi q \right )(t):=u(0,t;q)$ and $\left (\Psi q\right )(t):=u_x(0,t;q)$, where $q(t):=\left (M(t),g(t)\right )$, corresponding to the inverse problem, is compact and Lipschitz continuous. This result allows us to prove the existence of a solution of the minimization problem for the Tikhonov functional $J(q)=\frac{1}{2}\Vert \Phi q-w_{\ell} \Vert_{L^2(0,T)}^2+\frac{1}{2}\Vert \Psi q-\theta_{\ell} \Vert_{L^2(0,T)}^2$. As a consequence, the existence of a quasi-solution to the inverse problem is established. Furthermore, a vector-form expression for the Fréchet gradient of the Tikhonov functional is derived, and the Lipschitz continuity of the Fréchet gradient is rigorously proven. This crucial property ensures the monotonic behavior of iterative gradient-based numerical methods. In addition, the obtained results are directly applicable to the analysis and design of both atomic force microscopes (AFMs) and their advanced counterparts, transverse dynamic force microscopes (TDFMs).

 %As a consequence, the existence of a quasi-solution to the inverse problem is proved. Further, a vector-form gradient formula for the Fr\'{e}chet gradient of the Tikhonov functional is derived. The Lipschitz continuity of the Fr\'{e}chet gradient is proved. This important property guarantees the monotonicity of the iterations in gradient methods. The obtained explicit gradient formula and the solution of the related adjoint problem together form the basis for numerical solution methods based on the conjugate gradient algorithm (CGA) for the numerical solution of the inverse problem. In addition, these results are applicable to the use and design of both atomic force microscopes (AFMs) and their advanced version, transverse dynamic force microscopes (TDFM).

\end{abstract}
\begin{keyword}
Euler-Bernoulli beam model, inverse problem, shear force and bending moment identification, measured boundary deflection and slope, Dirichlet-to-Neumann maps, Tikhonov functional, Fr\'{e}chet gradient, gradient formula in vector form.
\end{keyword}

\end{frontmatter}

%% \linenumbers

\section{Introduction}

Atomic force microscopy (AFM) is an indispensable tool in crucial processes such as imaging, manipulation, and lithography at the nanometer scale. Invented by Binning and coworkers in 1986 \cite{Binnig:1986}, this instrument has been subsequently improved through various enhancements \cite{Abramovitch:2007, Raman:2008}. In scanning processes for measuring surface topography and material properties with AFM, the most fundamental element is a sharp tip that moves across the surface of the sample. To understand the interaction forces between the tip and the sample, adequate mathematical modeling with complete vibration analysis of the AFM cantilever is necessary  \cite{Jalili:2012}. In this context, investigating the moment and shear force acting on the micro-cantilever tip is among the primary problems in studying the dynamic operation of AFM \cite{Jalili:2004}.

The tip is located at the free end of an AFM micro-cantilever, and vibration begins as a result of the dynamic interaction forces between this tip and the sample. This vibration creates a transverse shear force and a bending moment on the tip of the cantilever \cite{Haugstad:2012}. Estimating the unknown shear force and bending moment allows us to have a better interpretation and understanding of the scan results. However, these forces can only be measured indirectly through a laser-based sensor system. For this reason, the problem of determining the unknown bending moment and shear force from experimentally measured deflection or/and slope, based on a suitable mathematical model, has been one of the most challenging inverse problems that has been continuously studied for over a quarter of a century (see \cite{Antognozzi:2001, Chang:2004, Zhang:2022, Baysal:2023, Hasanov:Kawano:2022} and references therein).

A common feature of the previous studies is that almost all of them address inverse problems involving the determination of a single input, such as the shear force, from a single observed output, such as the measured deflection at the tip of a cantilever. These problems are classified as inverse problems that are governed by the Dirichlet-to-Neumann map, according to the commonly used terminology. These types of problems can also be specified as inverse problems based on one unknown input and one measured output. The inverse problem of determining both the bending moment $M(t)$ and the shear force $g(t)$ from the measured deflection at the end of the AFM cantilever is considered in \cite{AH:OB:AK:2024}. However, since a cone-shaped cantilever type is involved here, there is the relationship $M(t) = -\left (2h \cos \theta/\pi \right )\, g(t)$ between the inputs $M(t)$ and $g(t)$, where $h, \, \theta >0$ are the tip length and half-conic angle, respectively. Hence, the bending moment $M(t)$ here cannot be considered an independent unknown input.

In this study, a mathematical model within the Euler-Bernoulli theory is proposed for the simultaneous identification of two fundamental inputs, the bending moment and the shear force, affecting the tip of an AFM cantilever. 
 The most feasible data, the measured deflection and slope at the tip of the cantilever, are assumed as measured outputs. Since the deflection and slope at the tip of the AFM cantilever are experimentally measurable data, it is evident that the proposed model has a wide range of applications. In particular, the cantilever beam, examined in this article, whose free end is subjected to bending moment and shear force, has been the subject of many studies because it is an important part of numerous machine systems (see, for instance, \cite{Mohr:2010} and references therein). This inverse problem was first examined in\cite{CHH:CCS:07} for a simple beam model with constant coefficients and without damping term, where detailed numerical solution steps based on the Conjugate Gradient Algorithm (CGA) were presented. On the other hand, no further study has been reported in the literature for a more general beam model or mathematical analysis of the considered inverse problem. Our aim is to fill this gap. Therefore, the present study serves as a continuation of the work in \cite{CHH:CCS:07}.  

Note that the Euler-Bernoulli theory is the most appropriate model to predict the cantilever kinematics during snap-to-contact event \cite{Eppell:2022}.

The paper is structured as follows. The mathematical model of dynamic vibration resulting from the effect of the bending moment and the shear force at the tip of the AFM cantilever is described in Section 2. A priori estimates for the weak and regular weak solutions of the forward problem are derived in Section 3. In Section 4, the input-output operators corresponding to inverse problem are introduced. Here, some properties of these operators are also analyzed. In Section 5 the Tikhonov functional is introduced and existence of a quasi solution is proved. The Fr\'echet derivative in vector form of the  Tikhonov functional through a suitable adjoint problem is derived in Section 6.
In Section 7, the Lipschitz continuity of the Fr\'{e}chet gradient is proved.
%The numerical algorithm and the results of computational experiments related to the simultaneous reconstruction of the bending moment and shear force are given in Section 8. 
Some conclusions are discussed in the final Section 8.

%section 2
\section{Mathematical model of the dynamic vibration of the AFM micro-cantilever subjected to bending moment and shear force}

We use Euler-Bernoulli beam theory to derive the dynamic equation of the AFM micro-cantilever.
Within this theory, the vibration of the micro-cantilever subjected to bending moment and shear force is described by the following mathematical model:
\begin{eqnarray}\label{1}
\left\{ \begin{array}{ll}
\rho(x) u_{tt}+\mu(x)u_{t}+ \left (r(x)u_{xx}\right)_{xx} =0,~ (x,t)\in \Omega_{T}:=(0,\ell)\times (0,T);\\ [4pt]
u(x,0)=u_{t}(x,0)=0, ~x \in (0,\ell); \\ [4pt]
u(0,t)=u_{t}(x,0)=0, \,\left(r(x)u_{xx}\right)_{x=\ell}=M(t),\,
\left (-(r(x)u_{xx})_x\right)_{x=\ell}=g(t),\,t \in [0,T].
\end{array} \right.
\end{eqnarray}
Here and below, $u(x,t)$ is the transverse deflection in position $x\in (0,\ell)$ and at the time $t\in \times [0,T]$, while $T>0$ is the final time instance, which may be small enough, and $\ell>0$ is the length of the cantilever. Further, $\rho(x)>0$ and $r(x):=E(x)I(x)>0$ are the mass density and the flexural rigidity (or bending stiffness) of a nonhomogeneous beam, respectively, while $E(x)>0$ is the elasticity modulus and $I(x)>0$ is the moment of inertia. The external damping mechanism is given by the term $\mu(x)u_t$. The coefficient $\mu(x)\ge 0$ is called the viscous (internal) damping parameter.

The functions $M(t)$ and $g(t)$ represent the bending moment and shear force acting on the tip of the cantilever.

The deflection $w_\ell(t)$ and the slope $\theta_\ell(t)$ at the tip $x=\ell$ of the cantilever are assumed to be feasible measurable data:
\begin{eqnarray}\label{2}
\left. \begin{array}{ll}
w_\ell(t):=u(0,t),~ t \in [0,T],\\ [4pt]
\theta_\ell(t):=u_x(\ell,t),~ t \in [0,T].
\end{array} \right.
\end{eqnarray}

Within the framework of the mathematical model (\ref{1})-(\ref{2}), the inverse problem of simultaneous determining the unknown  bending moment $M(t)$ and shear force $g(t)$ based on the data $w_\ell(t)$ and $\theta_\ell(t)$ is defined as follows:\\
\emph{Find the unknown inputs $M(t)$ and $g(t)$ in (\ref{1}) from the measured outputs $w_\ell(t)$ and $\theta_\ell(t)$ introduced in (\ref{2}).}

The problem (\ref{1})-(\ref{2}) can also be defined as an inverse boundary value problem with two Neumann inputs $M(t)$ and $g(t)$ and two Dirichlet measured outputs $w_\ell(t)$ and $\theta_\ell(t)$, according to generally accepted terminology.

%Figure 1
  \begin{figure}
  $\quad$\\
  $\quad$\\
  \centering
 \hspace*{-.6cm} \begin{tikzpicture}[scale=.9]
      \draw[color=black,ultra thick] (0,-0.6) -- (0,0.5);
      \draw[color=black] (-0.25,-1+3*0.25) -- (0,-1+2*0.25);
      \draw[color=black] (-0.25,-1+4*0.25) -- (0,-1+3*0.25);
      \draw[color=black] (-0.25,-1+5*0.25) -- (0,-1+4*0.25);
      \draw[color=black] (-0.25,-1+6*0.25) -- (0,-1+5*0.25);
 %    tip of the micro-cantilever
     \draw[thick, ->] (4*3.14+0.0,0.5) -- (4*3.14+0.0,-0.5);
  %   \draw[dotted] (4*3.14,{sin(0.35*deg(4*3.14))/1.2}) -- (2*3.14,0);
      \draw[dashed] (0,0) -- (4*3.14+0.5,0);
      \draw[thick,->] (4*3.14+0.5,0) -- (14.5,0) node [right]{$x$};
      \draw[thick,->] (0,-1.5) -- (0,1.75) node [left]{$u$};
   \draw[color=black,line width=1.5mm,smooth,domain=0:{4*3.14}]
        plot (\x,{(1-exp(-1.5*\x*\x/4))*1.5*sin(0.15*deg(\x))/1.5});
 %  \draw [line width=1.5mm] (0,0) -- (4*3.14,0);
      %\draw[color=black,ultra thick,smooth,domain=0:{4*3.14}] plot (\x,{sin(0.35*deg(\x))/1.5});
      \node[label=right:{\small$ u(0,t)=0$}] at (-.3,-0.4) {};
      \node[label=right:{\small$ u_{x}(0,t)=0$}] at (-.3,-1.0) {};
      \node[label=left:{\small $\left ( r(x)u_{xx}\right )_{x=\ell}=M(t)$}] at (4.20*3.14-0.2,1.4) {};
      \node[label=left:{\small$\left (-(r(x)u_{xx})_x\right )_{x=\ell}=g(t)$}] at (4.10*3.14,0.4) {};
      \draw[thick, <-] (4*3.14+0.1,0.6) arc (-90:90:0.3);
   \node[label=right:{\small$ w_{\ell}(t):=u(\ell,t)$}] at (4*3.14+0.2,{sin(0.35*deg(4*3.14))/1.5+0.1}) {};
   \node[label=right:{\small$ \theta_{\ell}(t):=u_x(\ell,t)$}] at (4*3.14+0.2,{sin(0.35*deg(4*3.14))/1.5-0.5}) {};
      \end{tikzpicture}
  \caption{Micro-cantilever subjected to bending moment and shear force}
  \label{Fig-1}
  \end{figure}

For given inputs $M(t)$ and $g(t)$  from some class of admissible functions, the initial boundary value problem (\ref{1}) will be referred as the \emph{forward problem}.

A schematic representation of the system governed by the model (\ref{1})-(\ref{2}) is given in Figure \ref{Fig-1}.

%section 2
\section{Analysis of the forward problem: necessary a priori estimates}

We derive here some necessary a priori estimates for the weak and regular weak solutions the forward problem (\ref{1}), employing the methodology given in \cite{Hasanov:Romanov:2021}.

We assume that the inputs and outputs in (\ref{1}) satisfy the following basic conditions:
\begin{eqnarray} \label{3}
\left \{ \begin{array}{ll}
\rho, r,\mu \in L^{\infty}(0,\ell),\\ [3pt]
M,g \in H^1(0,T),\\ [3pt]
0 < \rho_0 \le \rho(x)\le \rho_1,~0< r_0 \le r(x) \le r_1, ~\mu(x) \ge 0, ~ x \in (0,\ell).
\end{array} \right.
\end{eqnarray}

From the theory developed in \cite{Hasanov:Romanov:2021, Baysal:2019} it follows that under conditions (\ref{3}) there exists a unique weak solution $u\in L^2(0,T;\mathcal{V}^2(0,\ell))$ with $u_t\in L^2(0,T;L^2(0,\ell))$, $u_{tt}\in L^2(0,T;H^{-2}(0,\ell))$ of the forward problem (\ref{1}). Here and below, $\mathcal{V}^2(0,\ell):=\{v\in H^2(0,\ell):~ v(0)=v'(0)=0\}$ is the subspace of the Sobolev space $H^2(0,\ell)$ \cite{Adams:1978}.

\begin{thm}\label{Theorem-1}
Assume that the basic conditions (\ref{3}) hold. Then for the weak solution of the forward problem (\ref{1}), the following estimates hold:
\begin{eqnarray}\label{4}
\left. \begin{array}{ll}
\displaystyle \Vert u_{xx} \Vert^2_{L^2(0,T;L^2(0,\ell))}
\leq  C_1^2 \, \Vert q \Vert^2_{\mathbb{H}^1(0,T)}\,,  \\ [8pt]
\displaystyle \Vert u_t \Vert^2_{L^2(0,T;L^2(0,\ell))}
\leq C_2^2 \, \Vert q \Vert^2_{\mathbb{H}^1(0,T)}\,,
\end{array} \right.
\end{eqnarray}
where $q(t):=(M(t),g(t))$ and
\begin{eqnarray}\label{5}
\left. \begin{array}{ll}
 \Vert q \Vert^2_{\mathbb{H}^1(0,T)}:=  \Vert M \Vert^2_{H^1(0,T)}+ \Vert g \Vert^2_{H^1(0,T)}\,,
  \\ [8pt]
\displaystyle C_1^2=\left [\exp(T)-1\right ] C_0^2,~C_2^2=\frac{r_0}{2 \rho_0} \,\left [C_0^2+C_1^2\right ]\,,
  \\ [12pt]
\displaystyle C_0^2=\frac{\exp(T)\,r_0}{2 \rho_0} \,C_T^2,~ C^2_T=\max \left (2/T,\, 1+2 T \right )
\end{array} \right.
\end{eqnarray}
and $\rho_0$ and $r_0>0$ are the constants introduced in (\ref{3}).
\end{thm}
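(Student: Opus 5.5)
The plan is a standard energy estimate: multiply by $u_t$, integrate, move the tip terms to the time derivatives of $M$ and $g$, and close with Gronwall's inequality.

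\emph{Energy identity.} Multiply the equation in (\ref{1}) by $u_t$ and integrate over $\Omega_t:=(0,\ell)\times(0,t)$. Integrate by parts twice in $x$, using the clamped conditions $u(0,\tau)=u_x(0,\tau)=0$ and the tip conditions at $x=\ell$. This gives
\begin{eqnarray*}
\frac12\int_0^\ell \rho u_t^2(x,t)\,dx+\frac12\int_0^\ell r u_{xx}^2(x,t)\,dx+\int_0^t\!\!\int_0^\ell \mu u_\tau^2\,dx\,d\tau
=\int_0^t\left[g(\tau)u_\tau(\ell,\tau)+M(\tau)u_{x\tau}(\ell,\tau)\right]d\tau ,
\end{eqnarray*}
with signs fixed by the convention in (\ref{1}). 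The damping term is nonnegative and will be dropped.

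\emph{Removing the tip velocities.} For a weak solution, $u_t(\ell,\cdot)$ and $u_{xt}(\ell,\cdot)$ are not controlled, so the right-hand side cannot be bounded directly. This is why $M,g\in H^1(0,T)$ is assumed. Integrate by parts in time, using the zero initial data:
\[
\int_0^t g\,u_\tau(\ell,\tau)\,d\tau=g(t)u(\ell,t)-\int_0^t g'(\tau)u(\ell,\tau)\,d\tau ,
\]
and do the same for the $M$-term, with $u_x(\ell,\cdot)$ in place of $u(\ell,\cdot)$.

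\emph{Trace bounds.} Since $u(0,t)=u_x(0,t)=0$,
\[
|u_x(\ell,t)|^2\le \ell\,\Vert u_{xx}(\cdot,t)\Vert^2_{L^2(0,\ell)},\qquad |u(\ell,t)|^2\le \ell^2\,\Vert u_x(\cdot,t)\Vert^2_{L^2(0,\ell)}\le c\,\Vert u_{xx}(\cdot,t)\Vert^2_{L^2(0,\ell)} ,
\]
so every boundary value reduces to $\Vert u_{xx}\Vert$. Apply $ab\le \frac{\varepsilon}{2}a^2+\frac{1}{2\varepsilon}b^2$ to the pointwise terms $g(t)u(\ell,t)$ and $M(t)u_x(\ell,t)$. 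Choose $\varepsilon$ so that these terms are absorbed into $\frac{r_0}{2}\Vert u_{xx}(\cdot,t)\Vert^2$ on the left; for this the paper's constants require a normalization such as $\ell$ scaled to $1$.

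\emph{Sup norm of the inputs.} The leftover pointwise inputs are controlled by $\sup_t|g(t)|^2\le C_T^2\Vert g\Vert^2_{H^1(0,T)}$ with $C_T^2=\max(2/T,1+2T)$. This constant comes from
\[
g(t)^2\le 2g(s)^2+2T\int_0^T g'^2\,d\tau ,
\]
averaged over $s\in(0,T)$. The same bound holds for $M$.

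\emph{Gronwall.} The integrals $\int_0^t g'u(\ell,\tau)\,d\tau$ and its $M$-analogue are bounded by $\frac12\int_0^t|g'|^2\,d\tau+\frac12\int_0^t |u(\ell,\tau)|^2\,d\tau$, and then by the trace bounds. Set $E(t):=\Vert u_{xx}(\cdot,t)\Vert^2_{L^2(0,\ell)}$. This yields an inequality of the form
\[
E(t)\le \frac{C_T^2}{r_0}\,\Vert q\Vert^2_{\mathbb{H}^1(0,T)}+\int_0^t E(\tau)\,d\tau .
\]
Gronwall gives a pointwise bound involving $\exp(T)$, which is the role of $C_0$. Integrating $E(t)$ over $(0,T)$, or using the integral form of Gronwall, gives the factor $\exp(T)-1$ and hence the first estimate in (\ref{4}) with $C_1^2=[\exp(T)-1]C_0^2$.

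\emph{Velocity bound.} Return to the energy inequality and keep the $\frac{\rho_0}{2}\Vert u_t(\cdot,t)\Vert^2$ term. Bound the right-hand side by the quantities already controlled, namely $C_0^2\Vert q\Vert^2$ from the pointwise terms and $C_1^2\Vert q\Vert^2$ from the integrated terms. Integrating in $t$ gives the second estimate in (\ref{4}) with $C_2^2=\frac{r_0}{2\rho_0}[C_0^2+C_1^2]$.

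\emph{Rigor.} The computation is first done for Galerkin approximations, or for smooth data, and then passed to the limit by weak lower semicontinuity of the norms.

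\emph{Main obstacle.} The hard step is handling the boundary terms $u_t(\ell,t)$ and $u_{xt}(\ell,t)$. Integrating by parts in time is what removes them, and the absorption step must then be balanced carefully to recover exactly the stated constants.
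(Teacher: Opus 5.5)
Your route is the paper's: the energy identity from the multiplier $u_t$, integration by parts in time to move the time derivative off $u(\ell,\cdot)$ and $u_x(\ell,\cdot)$ onto $g$ and $M$, the trace inequalities (\ref{10}), the embedding behind $C_T$, absorption, Gronwall, and the velocity bound read off the same inequality.

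The one step that does not follow as written is your Gronwall inequality with unit rate. You weight Young's inequality by $\varepsilon$ on the pointwise terms $g(t)u(\ell,t)$ and $M(t)u_x(\ell,t)$, but by $\frac12$ on $\int_0^t g'u(\ell,\tau)\,d\tau$ and $\int_0^t M'u_x(\ell,\tau)\,d\tau$. After absorption, the coefficient of $\int_0^tE\,d\tau$ is then proportional to $(2\ell+\ell^3)/r_0$, not $1$. Setting $\ell=1$ does not cure this, because of the $r_0$.

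The paper uses the \emph{same} $\varepsilon$ in both Young inequalities, see (\ref{8})--(\ref{9}). Then $E(t)$ and $\int_0^tE\,d\tau$ carry the identical coefficient $(\ell+\ell^3/2)\varepsilon$. The choice $\varepsilon=r_0/(2\ell+\ell^3)$ turns both into $r_0/2$, as in (\ref{13}), so the rate is exactly $1$ for every $\ell$ with no normalization. The data then enter as $\frac1\varepsilon\bigl[\Vert M'\Vert^2_{L^2(0,T)}+M(t)^2\bigr]$. It is this sum, not $\sup_t M^2$ alone, that produces the $1+2T$ in $C_T^2$.

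Beyond that, do not try to land exactly on the displayed constants: neither your argument nor the paper's produces them. Dividing (\ref{13}) by $r_0/2$ gives $E(t)\le\int_0^tE\,d\tau+C_0^2\Vert q\Vert^2_{\mathbb{H}^1(0,T)}$ with $C_0^2=2(2\ell+\ell^3)C_T^2/r_0^2$. This depends on $\ell$ and not on $\rho_0$, unlike (\ref{5}).

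Similarly, (\ref{13}) together with $\int_0^tE\,d\tau\le C_1^2\Vert q\Vert^2_{\mathbb{H}^1(0,T)}$ gives only the \emph{pointwise} bound $\Vert u_t(\cdot,t)\Vert^2_{L^2(0,\ell)}\le\frac{r_0}{2\rho_0}[C_0^2+C_1^2]\Vert q\Vert^2_{\mathbb{H}^1(0,T)}$. Integrating over $(0,T)$ therefore puts a factor $T$ into $C_2^2$, which you and the paper both drop. With the single-$\varepsilon$ device, your proof gives (\ref{4}) with these corrected constants, which is what the paper's argument actually establishes.

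Two minor points. Cauchy--Schwarz gives $|u(\ell,t)|^2\le\ell\,\Vert u_x(\cdot,t)\Vert^2_{L^2(0,\ell)}$, not $\ell^2$; your version is false for $\ell<1$, which is the micro-cantilever regime. Your Galerkin remark supplies a justification the paper passes over, since $u_t$ is not directly an admissible multiplier for a weak solution with $u_{tt}\in L^2(0,T;H^{-2}(0,\ell))$.
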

{\bf Proof.} Multiply both sides of equation (\ref{1}) by $2u_t(x,t)$, integrate over $\Omega_t:=(0,\ell)\times (0,t)$, $t\in (0,T)$ and use the following identity
\begin{eqnarray}\label{6}
\left. \begin{array}{ll}
\displaystyle 2 \int_0^t \int_0^\ell \left (r(x)u_{xx}\right )_{xx} u_{\tau} dx d\tau \\ [10pt]
\qquad =
\displaystyle 2 \int_0^t \int_0^\ell \left [(r(x)u_{xx})_x u_{\tau}-r(x)u_{xx} u_{x\tau}\right ]_x dx d\tau
+  \int_0^t \int_0^\ell r(x) \left (u_{xx}^2\right )_{\tau} dx d\tau,
\end{array} \right.
\end{eqnarray}
for all $t\in[0,T]$. With the initial and boundary conditions in (\ref{1}), we obtain the following \emph{energy identity}:
\begin{eqnarray}\label{7}
\int_0^\ell \left [\rho(x) u_t^2 +r(x)u_{xx}^2\right ]dx+2 \int_0^t \int_0^\ell \mu (x)u_{\tau}^2 dx d \tau \qquad \qquad \qquad \qquad
 \nonumber \\ [1pt]
\qquad \qquad  = 2\int_0^t M(\tau) u_{x\tau}(\ell,\tau) d \tau+2\int_0^t g(\tau) u_{\tau}(\ell,\tau) d \tau,~t\in[0,T].
\end{eqnarray}
We transform and then estimate the right-hand-side integrals in (\ref{7}). We have,
\begin{eqnarray}\label{8}
2\int_0^t M(\tau) u_{x\tau}(\ell,\tau)d \tau =- 2\int_0^t M'(\tau) u_{x}(\ell,\tau)dx d \tau +
2 \left ( M(\tau) u_{x}(\ell,\tau) \right )_{\tau=0}^{\tau=t} \qquad \qquad
 \nonumber \\ [1pt]
\qquad \qquad  \displaystyle \le \varepsilon \, \left [\int_0^t u^2_{x}(\ell,\tau)d \tau + u^2_{x}(\ell,t) \right]+\frac{1}{\varepsilon}\,\left [\int_0^T \left (M'(t) \right)^2dt + M^2(t) \right],~t\in[0,T],
\end{eqnarray}
and
\begin{eqnarray}\label{9}
\hspace*{-2cm} 2\int_0^t g(\tau) u_{\tau}(\ell,\tau)d \tau =- 2\int_0^t g'(\tau) u(\ell,\tau)d \tau +
2 \left ( g(\tau) u(\ell,\tau) \right )_{\tau=0}^{\tau=t} \qquad \qquad\qquad  \ \ \ \
 \nonumber \\ [1pt]
\qquad \qquad  \displaystyle \le \varepsilon \, \left [\int_0^t u^2(\ell,\tau)d \tau + u^2(\ell,t) \right]+\frac{1}{\varepsilon}\,\left [\int_0^T \left (g'(t) \right)^2dt + g^2(t) \right],~t\in[0,T].
\end{eqnarray}
Here we need the following auxiliary trace inequalities
\begin{eqnarray}\label{10}
\left. \begin{array}{ll}
\displaystyle u^2(\ell,t) \le \frac{\ell^3}{2} \, \int_0^\ell u^2_{xx}(x,t)dx,~ t\in (0,T),
  \\ [10pt]
\displaystyle u_x^2(\ell,t) \le \ell \, \int_0^\ell u^2_{xx}(x,t)dx,~ t\in (0,T),
\end{array} \right.
\end{eqnarray}
for the weak solution of the forward problem (\ref{1}). These inequalities follow from the identity
\begin{eqnarray*}
u(\ell,t) = \int_0^\ell \int_0^x u_{\xi \xi}(\xi,t) \,d\xi\, d x.
\end{eqnarray*}

Now, we estimate the terms in parentheses on the right side of inequalities (\ref{8}) and (\ref{9}). To this end, we employ the mean value theorem for integrals,
 \begin{eqnarray*}
\exists s\in [0,T],~\displaystyle M(s)=\frac{1}{T}\int_0^T M(t) dt,
\end{eqnarray*}
for $M\in H^1(0,T)$, and the identity
 \begin{eqnarray*}
\displaystyle M(t)=M(s)+\int_s^t M(\tau) d\tau,\, t\in [0,T].
\end{eqnarray*}
In view of the Cauchy-Schwarz inequality we deduce that
\begin{eqnarray*}
|M(t)| \leq |M(s)| + \left|\int_s^t M'(\tau) d\tau \right| \leq  \frac{1}{\sqrt{T}}\, ||M||_{L^2(0,T)} + \sqrt{T} \int_0^T| M'(t)|^2 dt.
\end{eqnarray*}
 Taking the maximum over [0,T], left hand side becomes the  $C[0,T]$-norm. Similar estimate can be deduced for $g\in H^1(0,T)$. As a consequence, we obtain following one-dimensional forms
\begin{eqnarray}\label{11}
\left. \begin{array}{ll}
\displaystyle \Vert M \Vert_{C[0,T]} \le \frac{1}{\sqrt{T}} \,\Vert M \Vert_{L^2(0,T)}+\sqrt{T} \, \,\Vert M' \Vert_{L^2(0,T)},\\ [10pt]
\displaystyle \Vert g \Vert_{C[0,T]} \le \frac{1}{\sqrt{T}} \,\Vert g \Vert_{L^2(0,T)}+\sqrt{T}\, \,\Vert g' \Vert_{L^2(0,T)}
\end{array} \right.
\end{eqnarray}
of the Sobolev embedding estimates \cite{Adams:1978}.

Considering inequalities (\ref{10}) and (\ref{11}) in (\ref{8}) and (\ref{9}), we obtain:
\begin{eqnarray*}
\displaystyle 2\int_0^t M(\tau) u_{x\tau}(\ell,\tau)d \tau \le \ell \varepsilon\left [ \int_0^t \int_0^\ell u^2_{xx}(x,\tau)dx d\tau+\int_0^\ell u^2_{xx}(x,t)dx \right ] +\frac{C^2_T}{\varepsilon}\,\Vert M \Vert^2_{H^1(0,T)} \\ [4pt]
\displaystyle 2\int_0^t g(\tau) u_{\tau}(\ell,\tau)d \tau \le \frac{\ell^3 \varepsilon}{2}\left [ \int_0^t \int_0^\ell u^2_{xx}(x,\tau)dx d\tau+\int_0^\ell u^2_{xx}(x,t)dx \right ]+ \frac{C^2_T}{\varepsilon} \,\Vert g\Vert^2_{H^1(0,T)},
\end{eqnarray*}
for all $t\in[0,T]$, with the constant $C_T>0$ introduced in (\ref{5}). Then using these inequalities in the energy identity in (\ref{7}) we deduce that
\begin{eqnarray}\label{12}
\displaystyle \rho_0 \int_0^\ell u_t^2dx  +\left (r_0 -(\ell+\ell^3/2)\varepsilon \right ) \int_0^\ell u_{xx}^2dx+2 \int_0^t \int_0^\ell \mu (x)u_{\tau}^2\, dx d \tau \qquad \quad \qquad \quad
 \nonumber \\ [3pt]
\displaystyle \le \left (\ell+\ell^3/2\right )\varepsilon \, \int_0^t \int_0^\ell u_{xx}^2dx d\tau
+\frac{C^2_T}{\varepsilon} \,\Vert q\Vert^2_{\mathbb{H}^1(0,T)},~t\in[0,T]. \qquad
\end{eqnarray}
We define the arbitrary parameter $\varepsilon>0$ as follows:
\begin{eqnarray*}
\varepsilon = \frac{r_0 }{ 2\ell+\ell^3}  .
\end{eqnarray*}
Then $r_0-\left (\ell+\ell^3/2\right)\varepsilon=r_0/2$ and (\ref{12}) implies the main integral inequality:
\begin{eqnarray}\label{13}
\displaystyle \rho_0 \int_0^\ell u_t^2dx  +\frac{r_0 }{2} \int_0^\ell u_{xx}^2dx+2 \int_0^t \int_0^\ell \mu (x)u_{\tau}^2 dx d \tau  \qquad \qquad \qquad \qquad \nonumber \\ [3pt]
\displaystyle \le \frac{r_0 }{2} \int_0^t \int_0^\ell u_{xx}^2dx d\tau +\frac{\left (2\ell+\ell^3\right)C^2_T}{r_0} \, \Vert q\Vert^2_{\mathbb{H}^1(0,T)},~t\in[0,T], \quad
\end{eqnarray}
where the right-hand-side norm $\Vert q \Vert^2_{\mathbb{H}^1(0,T)}$ is defined in (\ref{5}).

As a first consequence of (\ref{13}), we find:
\begin{eqnarray*}
\int_0^\ell u_{xx}^2dx  \le  \int_0^t \int_0^\ell u_{xx}^2dx d\tau +C_0^2\, \Vert q \Vert^2_{\mathbb{H}^1(0,T)},~t\in[0,T], \quad
\end{eqnarray*}
where $C_0>0$ is the constant introduced in (\ref{5}). With the Gronwall-Bellman inequality, this yields:
\begin{eqnarray}\label{14}
 \int_0^\ell u_{xx}^2dx  \le  C_0^2 \Vert q \Vert^2_{\mathbb{H}^1(0,T)} \exp(t),~t\in[0,T]. \quad
\end{eqnarray}
Integrating this inequality over $[0,T]$ we arrive at the first estimate in (\ref{4}).

The second consequence of (\ref{13}) is the following inequality:
\begin{eqnarray*}
\int_0^\ell u_t^2dx  \le  \frac{r_0}{2\rho_0} \int_0^t \int_0^\ell u_{xx}^2dx d\tau +\frac{\left (2\ell+\ell^3\right)C^2_T}{\rho_0 r_0} \, \Vert q \Vert^2_{\mathbb{H}^1(0,T)},~t\in[0,T]. \quad
\end{eqnarray*}
With inequality (\ref{14}) this implies the second  estimate in (\ref{4}).  \hfill$\Box$

\begin{thm}\label{Theorem-2}
Assume that in addition to the conditions of Theorem \ref{Theorem-1}, the following regularity conditions hold:
\begin{eqnarray} \label{15}
r \in H^2(0,\ell),\, M,g \in H^2(0,T).
\end{eqnarray}
Then for the regular weak solution $u\in L^2(0,T;H^4(0,\ell))$, $u_t\in L^2(0,T;\mathcal{V}^2(0,\ell))$, $u_{tt}\in L^2(0,T;L^2(0,\ell))$ and $u_{ttt}\in L^2(0,T;H^{-2}(0,\ell))$ of the forward problem (\ref{1}), the following a priori estimates hold:
\begin{eqnarray}\label{16}
\left. \begin{array}{ll}
\displaystyle \Vert u_{xxt} \Vert^2_{L^2(0,T;L^2(0,\ell))}
\leq  C_1^2 \, \Vert q\Vert^2_{\mathbb{H}^2(0,T)}\,,  \\ [10pt]
\displaystyle \Vert u_{tt} \Vert^2_{L^2(0,T;L^2(0,\ell))}
\leq C_2^2\,  \Vert q \Vert^2_{\mathbb{H}^2(0,T)}\,,
\end{array} \right.
\end{eqnarray}
where
\begin{eqnarray}\label{17}
 \Vert q \Vert^2_{\mathbb{H}^2(0,T)}:=\Vert M \Vert^2_{H^2(0,T)}+ \Vert g \Vert^2_{H^2(0,T)}
\end{eqnarray}
and $C_1,C_2>0$ are the constants introduced in (\ref{5}).
\end{thm}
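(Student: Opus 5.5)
The idea is to differentiate the forward problem (\ref{1}) in time and apply Theorem \ref{Theorem-1} to the differentiated problem. Set $v:=u_t$. Formally, $v$ satisfies
\begin{eqnarray*}
\rho(x) v_{tt}+\mu(x)v_t+(r(x)v_{xx})_{xx}=0,\quad v(0,t)=v_x(0,t)=0,\quad (r(x)v_{xx})_{x=\ell}=M'(t),\quad \left(-(r(x)v_{xx})_x\right)_{x=\ell}=g'(t).
\end{eqnarray*}
The initial data are $v(x,0)=u_t(x,0)=0$ and $v_t(x,0)=u_{tt}(x,0)$. To compute the latter, I evaluate the equation at $t=0$: $\rho u_{tt}(x,0)=-\mu u_t(x,0)-(r u_{xx}(x,0))_{xx}=0$, since $u(\cdot,0)=0$. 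Hence $v_t(x,0)=0$, and $v$ solves a problem of exactly the form (\ref{1}) with input $q'=(M',g')$. Under (\ref{15}) we have $M',g'\in H^1(0,T)$, so conditions (\ref{3}) hold for this new input.

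Applying Theorem \ref{Theorem-1} to $v$ then gives
\begin{eqnarray*}
\Vert v_{xx}\Vert^2_{L^2(0,T;L^2(0,\ell))}\le C_1^2\Vert q'\Vert^2_{\mathbb{H}^1(0,T)},\qquad \Vert v_t\Vert^2_{L^2(0,T;L^2(0,\ell))}\le C_2^2\Vert q'\Vert^2_{\mathbb{H}^1(0,T)},
\end{eqnarray*}
with the same constants (\ref{5}), since they depend only on $T,\rho_0,r_0$. Since $v_{xx}=u_{xxt}$ and $v_t=u_{tt}$, and $\Vert q'\Vert^2_{\mathbb{H}^1(0,T)}=\Vert M'\Vert^2_{H^1}+\Vert g'\Vert^2_{H^1}\le \Vert q\Vert^2_{\mathbb{H}^2(0,T)}$ by (\ref{17}), estimate (\ref{16}) follows immediately.

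The main obstacle is justifying the differentiation rigorously. Three points need care: that $u_t$ of the regular weak solution is indeed the weak solution of the differentiated problem, that the boundary traces differentiate to $M'$ and $g'$, and that $u_{tt}(\cdot,0)=0$ holds in a meaningful sense. The last point is delicate because, at the weak level, $u_{tt}$ only lies in $H^{-2}$ at $t=0$. I would handle all three via Galerkin approximations, as in \cite{Hasanov:Romanov:2021}. For the finite-dimensional approximations the time derivative can be taken directly, and the compatibility $u_{tt}^N(0)=0$ follows from the Galerkin ODE with zero initial data. The required regularity is exactly that stated in the theorem ($r\in H^2$, $u\in L^2(0,T;H^4)$). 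Alternatively, one can multiply (\ref{1}) differentiated in $t$ by $2u_{tt}$ and repeat the energy argument of Theorem \ref{Theorem-1} line by line, using the trace inequalities (\ref{10}) for $u_t$ and (\ref{11}) for $M'$ and $g'$. The estimates are then passed to the limit by weak lower semicontinuity of the norms.
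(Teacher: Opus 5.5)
\textbf{Same route as the paper, but a shared gap at $t=0$.} Your strategy is the paper's. The paper differentiates (\ref{1}) in $t$, multiplies by $2u_{tt}$, and reruns the energy argument of Theorem~\ref{Theorem-1} with $(M',g')$ in place of $(M,g)$; your black-box application of Theorem~\ref{Theorem-1} to $v=u_t$ is the same thing, packaged more cleanly. The gap, present in both, is the initial condition $v_t(\cdot,0)=u_{tt}(\cdot,0)=0$, and your justification of it is incorrect.

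\emph{Why the pointwise argument fails.} Evaluating the equation at $t=0$ only sees the interior of $(0,\ell)$. The Neumann data enter the weak formulation on $\mathcal{V}^2(0,\ell)$ through the functional $\psi\mapsto M(t)\psi'(\ell)+g(t)\psi(\ell)$. So at $t=0$ one obtains $\langle\rho u_{tt}(\cdot,0),\psi\rangle=M(0)\psi'(\ell)+g(0)\psi(\ell)$, a point/dipole load at $x=\ell$ that vanishes only when $M(0)=g(0)=0$.

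\emph{Why the Galerkin argument fails.} Write $u^N=\sum_k c^N_k(t)\psi_k$ with $c^N(0)=\dot c^N(0)=0$. The Galerkin system at $t=0$ reads $B\,\ddot c^N(0)=F(0)$, where $B_{jk}=(\rho\psi_k,\psi_j)$ and $F_j(0)=M(0)\psi_j'(\ell)+g(0)\psi_j(\ell)$. This functional is unbounded in the $L^2(0,\ell)$-norm, so $\Vert u^N_{tt}(\cdot,0)\Vert_{L^2(0,\ell)}\to\infty$ as $N\to\infty$. The energy identity for $v^N=u^N_t$ therefore carries the extra term $\int_0^\ell\rho\,(u^N_{tt}(x,0))^2dx$ on its right-hand side, and the uniform bound you need to pass to the limit is lost.

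\textbf{The estimate is false without compatibility.} Under (\ref{3}) and (\ref{15}) alone, (\ref{16}) fails. Take $\rho=r=1$, $\mu=0$, $g\equiv0$, $M\equiv M_0\neq0$, and let $(\omega_k^2,\phi_k)$ be the cantilever eigenpairs. Then
$u=M_0x^2/2-\sum_k a_k\cos(\omega_kt)\phi_k$, where integrating by parts gives $\omega_k^2a_k=(M_0x^2/2,\phi_k'''')_{L^2(0,\ell)}=M_0\phi_k'(\ell)$. Hence $\Vert u_{tt}\Vert^2_{L^2(\Omega_T)}=M_0^2\sum_k\phi_k'(\ell)^2\int_0^T\cos^2(\omega_kt)\,dt=\infty$, since $|\phi_k'(\ell)|$ grows like $k$. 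A constant $g\neq0$ behaves the same way, with $\phi_k(\ell)$ (bounded away from zero) in place of $\phi_k'(\ell)$. So without the compatibility conditions $M(0)=g(0)=0$, a regular weak solution with $u_{tt}\in L^2(0,T;L^2(0,\ell))$ does not exist.

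The paper's proof has the same hole: its differentiated energy identity silently drops the $t=0$ term $\int_0^\ell\rho\,u_{tt}^2(x,0)\,dx$.

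\textbf{Fix.} Add $M(0)=g(0)=0$ to the hypotheses. Then $F(0)=0$, the Galerkin approximations do satisfy $u^N_{tt}(0)=0$, and the rest of your argument goes through. It is cleaner than the paper's, since it reuses Theorem~\ref{Theorem-1} verbatim together with $\Vert q'\Vert_{\mathbb{H}^1(0,T)}\le\Vert q\Vert_{\mathbb{H}^2(0,T)}$.
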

{\bf Proof.} We first differentiate equation (\ref{1}) with respect to \(t\), then multiply both sides by $2u_{tt}(x,t)$, integrate over $\Omega_t:=(0,\ell)\times (0,t)$, and then use the analogue of the identity (\ref{6}) with $u_t$ replaced by $u_tt$. In view of the initial and boundary conditions in (\ref{2}), we obtain the following integral identity:
\begin{eqnarray*}
\int_0^\ell \left [\rho(x) u_{tt}^2 +r(x)u_{xx\tau}^2\right ]dx+2 \int_0^t \int_0^\ell \mu (x)u_{\tau\tau}^2 dx d \tau  \qquad \qquad \qquad  \qquad\qquad \qquad
 \nonumber \\ [1pt]
\qquad \qquad \qquad \qquad  = 2\int_0^t M'(\tau) u_{x\tau\tau}(\ell,\tau) d \tau+2\int_0^t g'(\tau) u_{\tau\tau}(\ell,\tau) d \tau,~t\in[0,T].
\end{eqnarray*}
Transforming the right-hand-side integrals in the same way as in the proof of Theorem 1, we obtain the following integral inequality:
\begin{eqnarray*}
\displaystyle \rho_0 \int_0^\ell u_{tt}^2dx  +\frac{r_0 }{2} \int_0^\ell u_{xxt}^2dx+2 \int_0^t \int_0^\ell \mu (x)u_{\tau\tau}^2 dx d \tau  \qquad \quad \qquad \qquad \qquad \nonumber \\ [3pt]
\displaystyle \le \frac{r_0 }{2} \int_0^t \int_0^\ell u_{xxt}^2dx d\tau +\frac{\left (2\ell+\ell^3\right)C^2_T}{r_0} \,\Vert q \Vert^2_{\mathbb{H}^2(0,T)},~t\in[0,T], \quad
\end{eqnarray*}
where the norm $\Vert q\Vert^2_{\mathbb{H}^2(0,T)}$ on right-hand-side is defined in (\ref{17}).

The required estimates in (\ref{16}) are derived from this inequality.  \hfill$\Box$

The following trace estimates are the consequence of the estimates (\ref{4}), (\ref{10})  and (\ref{16}).

\begin{Cor}\label{Corollary-1}
The following trace estimates
\begin{eqnarray}\label{18}
\left. \begin{array}{ll}
\displaystyle \Vert u(\ell,\cdot) \Vert^2_{L^2(0,T)}
\leq   \frac{\ell^3 }{2}\, C_1^2 \, \Vert q \Vert^2_{\mathbb{H}^1(0,T)}\,, \\ [10pt]
\displaystyle \Vert u_{x} (\ell,\cdot) \Vert^2_{L^2(0,T)}
\leq \ell \,C_1^2 \, \Vert q \Vert^2_{\mathbb{H}^1(0,T)}
\end{array} \right.
\end{eqnarray}
and
\begin{eqnarray}\label{19}
\left. \begin{array}{ll}
\displaystyle \Vert u_t(\ell,\cdot) \Vert^2_{L^2(0,T)}
\leq   \frac{\ell^3 }{2}\, C_1^2 \, \Vert q \Vert^2_{\mathbb{H}^2(0,T)}\,,  \\ [10pt]
\displaystyle \Vert u_{xt} (\ell,\cdot) \Vert^2_{L^2(0,T)}
\leq \ell \,C_1^2 \, \Vert q \Vert^2_{\mathbb{H}^2(0,T)}
\end{array} \right.
\end{eqnarray}
hold with the same constant $C_1>0$ introduced in (\ref{5}), for the weak and regular weak solutions of the forward problem (\ref{1}), under the conditions of Theorem \ref{Theorem-1}, and Theorem \ref{Theorem-2}, respectively.
\end{Cor}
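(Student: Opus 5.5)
The plan is to combine the pointwise-in-time trace inequalities (\ref{10}) with the space-time estimates of Theorem \ref{Theorem-1} and Theorem \ref{Theorem-2}. First, for the weak solution, fix $t\in(0,T)$. Since $u(\cdot,t)\in\mathcal{V}^2(0,\ell)$ satisfies $u(0,t)=u_x(0,t)=0$, we have
\begin{eqnarray*}
u_x(\ell,t)=\int_0^\ell u_{xx}(x,t)\,dx, \qquad u(\ell,t)=\int_0^\ell\int_0^x u_{\xi\xi}(\xi,t)\,d\xi\,dx .
\end{eqnarray*}
The Cauchy--Schwarz inequality then gives (\ref{10}) for a.e. $t$. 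Integrating (\ref{10}) over $(0,T)$ yields
\begin{eqnarray*}
\Vert u(\ell,\cdot)\Vert^2_{L^2(0,T)}\le \frac{\ell^3}{2}\,\Vert u_{xx}\Vert^2_{L^2(0,T;L^2(0,\ell))}, \qquad \Vert u_x(\ell,\cdot)\Vert^2_{L^2(0,T)}\le \ell\,\Vert u_{xx}\Vert^2_{L^2(0,T;L^2(0,\ell))} .
\end{eqnarray*}
Inserting the first estimate of (\ref{4}) gives (\ref{18}) directly.

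For (\ref{19}) the same argument is applied to $v:=u_t$. Under the hypotheses of Theorem \ref{Theorem-2}, $u_t\in L^2(0,T;\mathcal{V}^2(0,\ell))$. So for a.e. $t$, $v(\cdot,t)$ satisfies the clamped conditions $v(0,t)=v_x(0,t)=0$, which are obtained by differentiating $u(0,t)=u_x(0,t)=0$ in $t$. Hence (\ref{10}) holds with $u$ replaced by $u_t$ and $u_{xx}$ by $u_{xxt}$. Integrating in time and invoking the first estimate of (\ref{16}) gives (\ref{19}) with the same constant $C_1$.

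The only step requiring some care is justifying the trace identities for functions that are merely in $H^2(0,\ell)$ at a.e. time. I would handle this by the embedding $H^2(0,\ell)\subset C^1[0,\ell]$, which makes the point values $v(\ell)$, $v'(\ell)$ meaningful and the fundamental-theorem-of-calculus representations valid. Measurability in $t$ of $t\mapsto u(\ell,t)$ and $t\mapsto u_x(\ell,t)$ then follows because the trace maps are bounded linear functionals on $\mathcal{V}^2(0,\ell)$. For $u_t$, one also needs the interchange of trace and time derivative. This holds because $u_t\in L^2(0,T;\mathcal{V}^2(0,\ell))$ means the distributional time derivative takes values in $\mathcal{V}^2$, and bounded linear trace maps commute with it. Everything else is a direct substitution of earlier estimates.
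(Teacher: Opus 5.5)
Your proposal is correct and follows the paper's route exactly. The paper obtains (\ref{18}) by integrating the pointwise trace inequalities (\ref{10}) over $(0,T)$ and inserting the first estimate in (\ref{4}), and obtains (\ref{19}) by applying (\ref{10}) to $u_t$ and inserting the first estimate in (\ref{16}). Your remarks on the embedding $H^2(0,\ell)\subset C^1[0,\ell]$, measurability of the traces, and commuting the trace with $\partial_t$ just spell out details the paper leaves implicit.
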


%section 4
\section{The input-output operators}

We introduce the \emph{sets of admissible inputs} as follows:
\begin{eqnarray} \label{20}
\left. \begin{array}{ll}
\mathcal{M}(0,T)=\{M\in H^2(0,T):~ \Vert M \Vert_{H^2(0,T)}\le C_\mathcal{Q}\}, \\ [6pt]
\mathcal{G}(0,T)=\{g\in H^2(0,T):~ \Vert g \Vert_{H^2(0,T)}\le C_\mathcal{Q}\},
\end{array} \right.
\end{eqnarray}
where $C_\mathcal{Q}>0$ is a constant independent of $M(t)$ and $g(t)$. Further, we define set $\mathcal{Q}(0,T) \subset H^2(0,T)\times H^2(0,T)$ consisting of pairs of sets of admissible inputs:
\begin{eqnarray} \label{21}
\mathcal{Q}(0,T):=\{q(t):=\langle M(t), g(t) \rangle : \, M\in\mathcal{M}(0,T), \, g\in\mathcal{G}(0,T) \},
\end{eqnarray}
where $\mathcal{M}(0,T)$ and $\mathcal{G}(0,T)$ are the sets of admissible inputs introduced in (20).

Denote by $u(x,t;q)$ the unique weak solution of the forward problem (\ref{1}) corresponding to this input $q\in \mathcal{Q}(0,T)$. Then $u(\ell,t;q)$ and $u_x(\ell,t;q)$ are the outputs. This naturally leads to introducing the following input-output operators:
\begin{eqnarray}\label{22}
\left. \begin{array}{ll}
\Phi(q)(t):=u(\ell,t;q),~q \in \mathcal{Q}(0,T),~\Phi: q \mapsto L^2(0,T); \\[5pt]
\Psi(q)(t):=u_x(\ell,t;q),~q \in \mathcal{Q}(0,T),~\Psi: q \mapsto L^2(0,T),
\end{array} \right.
\end{eqnarray}
where $\mathcal{Q}(0,T)$ is the set introduced in (\ref{21}).

According to the widely used terminology of inverse problems, these input-output operators (\ref{22}) can alternatively be described as Neumann-to-Dirichlet maps.

In view of the above input-output operators, the inverse problem (\ref{1})-(\ref{2}) can be reformulated as the following system of operator equations:
 \begin{eqnarray}\label{23}
\left \{ \begin{array}{ll}
\Phi(q)(t)=w_\ell(t),~w_\ell \in  L^2(0,T), \\ [4pt]
\Psi(q)(t)=\theta_\ell(t),~\theta_\ell \in  L^2(0,T),~q \in \mathcal{Q}(0,T).
\end{array} \right.
\end{eqnarray}

From the following lemma it follows that the considered inverse problem is ill-posed.

\begin{Lem}\label{Lemma-1}
Assume that conditions of Theorem \ref{Theorem-2} are satisfied. Then the input-output operators introduced in (\ref{22}) are compact operators.
\end{Lem}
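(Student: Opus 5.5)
The plan is to show that $\Phi$ and $\Psi$ map bounded subsets of $\mathcal{Q}(0,T)$ into relatively compact subsets of $L^2(0,T)$. The idea is to prove that their ranges lie in a bounded subset of $H^1(0,T)$ and then use the compact embedding $H^1(0,T)\hookrightarrow L^2(0,T)$ (Rellich--Kondrachov in one dimension).

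First, take a bounded sequence $\{q_n\}\subset\mathcal{Q}(0,T)$, so $\Vert q_n\Vert_{\mathbb{H}^2(0,T)}\le \sqrt{2}\,C_\mathcal{Q}$. Under the conditions of Theorem \ref{Theorem-2}, each $u_n:=u(\cdot,\cdot;q_n)$ is a regular weak solution. Corollary \ref{Corollary-1}, estimate (\ref{18}), gives
\begin{eqnarray*}
\Vert \Phi(q_n)\Vert^2_{L^2(0,T)}\le \frac{\ell^3}{2}C_1^2\Vert q_n\Vert^2_{\mathbb{H}^1(0,T)}, \qquad \Vert \Psi(q_n)\Vert^2_{L^2(0,T)}\le \ell\, C_1^2\Vert q_n\Vert^2_{\mathbb{H}^1(0,T)}.
\end{eqnarray*}
Estimate (\ref{19}) bounds the time derivatives:
\begin{eqnarray*}
\Vert (\Phi(q_n))'\Vert^2_{L^2(0,T)}=\Vert u_{n,t}(\ell,\cdot)\Vert^2_{L^2(0,T)}\le \frac{\ell^3}{2}C_1^2\Vert q_n\Vert^2_{\mathbb{H}^2(0,T)},
\end{eqnarray*}
and similarly $\Vert u_{n,xt}(\ell,\cdot)\Vert^2_{L^2(0,T)}\le \ell C_1^2\Vert q_n\Vert^2_{\mathbb{H}^2(0,T)}$. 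Combining these, $\Vert\Phi(q_n)\Vert_{H^1(0,T)}$ and $\Vert\Psi(q_n)\Vert_{H^1(0,T)}$ are bounded by a constant depending only on $\ell$, $C_1$ and $C_\mathcal{Q}$.

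Second, the compact embedding $H^1(0,T)\hookrightarrow L^2(0,T)$ yields subsequences of $\{\Phi(q_n)\}$ and $\{\Psi(q_n)\}$ that converge in $L^2(0,T)$. Extracting nested subsequences gives a common index set, which also proves compactness of the vector map $(\Phi,\Psi)$ into $L^2(0,T)\times L^2(0,T)$.

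The main obstacle is justifying that the time derivative of the trace equals the trace of the time derivative, i.e. $\frac{d}{dt}u(\ell,t)=u_t(\ell,t)$ in $L^2(0,T)$, and likewise for $u_x$. I would handle this using $u_t\in L^2(0,T;\mathcal{V}^2(0,\ell))$ and the bounded trace maps $v\mapsto v(\ell)$ and $v\mapsto v'(\ell)$ on $\mathcal{V}^2(0,\ell)$ from (\ref{10}). Since these maps are linear and continuous, they commute with the distributional time derivative. This gives $\Phi(q_n),\Psi(q_n)\in H^1(0,T)$ with the stated derivatives.

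If "compact" is meant as complete continuity (weakly convergent inputs map to strongly convergent outputs), I would add one more step. By linearity of the forward problem in $q$, $\Phi$ and $\Psi$ are linear, and they are bounded from $\mathbb{H}^1$ by (\ref{18}). A weakly convergent sequence in $H^2\times H^2$ is bounded, so the argument above applies: its image has $L^2$-convergent subsequences. The limit of every such subsequence is identified with the image of the weak limit by continuity of the linear map, so the whole image sequence converges strongly.
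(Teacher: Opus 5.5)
Your proposal is correct and follows the same route as the paper: the trace estimates (\ref{18}) and (\ref{19}) give a uniform $H^1(0,T)$ bound on $\Phi(q^{(m)})$ and $\Psi(q^{(m)})$, and the compact embedding $H^1(0,T)\hookrightarrow L^2(0,T)$ then gives $L^2$-convergent subsequences. Your additional steps are valid refinements that the paper leaves implicit: that the trace maps commute with the time derivative, the nested-subsequence step for the pair $(\Phi,\Psi)$, and the complete-continuity argument via linearity.
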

{\bf Proof.} Let $\{q^{(m)} \}$ be a bounded sequence in  $\mathcal{Q}(0,T)$. Therefore, inputs $\{M^{(m)} \}\subset \mathcal{M}(0,T)$ and $\{g^{(m)} \}\subset \mathcal{G}(0,T)$ are  bounded  sequences in $H^2(0,T)$. Denote by $\{\Phi(q^{(m)}) \}$ and $\{\Psi(q^{(m)}) \}$ the sequences of outputs: $\Phi(q^{(m)})(t)=u(\ell,t;q^{(m)})$ and $\Psi(q^{(m)})(t)=u_x(\ell,t;q^{(m)})$, $m=1,2,3,\, ...~ $.
From the estimates (\ref{18}) and (\ref{19}), it follows that these sequences are bounded in the norm of the Sobolev space $H^1(0,T)$, and hence compact in $L^2(0,T)$ due to Rellich-Kandrachov compactness theorem \cite{Adams:1978}. This means that the  input-output operators transform  bounded sequences $\{M^{(m)} \}$ and $\{g^{(m)}\}$  in $H^2(0,T)$  to the compact sequences $\{\Phi(M^{(m)}) \}$, and $\{\Psi(g^{(m)}) \}$ in $L^2(0,T)$, respectively. Hence, these operators are compact. \hfill$\Box$

Next lemma shows that the input-output operators are also Lipschitz continuous.

\begin{Lem}\label{Lemma-2}
Under the conditions of Theorem \ref{Theorem-1}, the input-output operators introduced in (\ref{21}) are Lipschitz continuous, that is,
\begin{eqnarray}\label{24}
\left. \begin{array}{ll}
\displaystyle \Vert \Phi(q_1)-\Phi(q_2) \Vert_{L^2(0,T)}
\leq  L_1 \Vert q_1-q_2 \Vert_{\mathbb{H}^1(0,T)},  \\ [8pt]
\displaystyle \Vert \Psi(q_1)-\Psi(q_2) \Vert_{L^2(0,T)}
\leq  L_2 \Vert q_1-q_2 \Vert_{\mathbb{H}^1(0,T)}, ~\forall q_1,q_2 \in \mathcal{Q}(0,T),
\end{array} \right.
\end{eqnarray}
where $L_1=\ell \sqrt{\ell}\,C_1/\sqrt{2},\,L_2=\sqrt{\ell}\,C_1 >0$ are the Lipschitz constants, and $C_1>0$ is the constant introduced in (\ref{6}).
\end{Lem}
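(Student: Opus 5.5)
The plan is to exploit the linearity of the forward problem (\ref{1}) with respect to the input $q=(M,g)$. Take $q_1=(M_1,g_1)$ and $q_2=(M_2,g_2)$ in $\mathcal{Q}(0,T)$ and set $\delta u(x,t):=u(x,t;q_1)-u(x,t;q_2)$ and $\delta q:=q_1-q_2=(M_1-M_2,\,g_1-g_2)$. Since the equation, the initial conditions and the clamped conditions at $x=0$ are all homogeneous and linear, $\delta u$ is the unique weak solution of (\ref{1}) with the boundary data $M$ and $g$ replaced by $\delta M:=M_1-M_2$ and $\delta g:=g_1-g_2$. Moreover $\delta M,\delta g\in H^1(0,T)$, because $H^2(0,T)\subset H^1(0,T)$, so the basic conditions (\ref{3}) hold for the input $\delta q$. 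This means Theorem \ref{Theorem-1} and the first part of Corollary \ref{Corollary-1} apply to $\delta u$ directly.

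By the definition (\ref{22}) of the input-output operators, $\Phi(q_1)(t)-\Phi(q_2)(t)=\delta u(\ell,t)$ and $\Psi(q_1)(t)-\Psi(q_2)(t)=\delta u_x(\ell,t)$. Applying the trace estimates (\ref{18}) to $\delta u$ gives
\[
\Vert \Phi(q_1)-\Phi(q_2)\Vert^2_{L^2(0,T)}\le \frac{\ell^3}{2}\,C_1^2\,\Vert \delta q\Vert^2_{\mathbb{H}^1(0,T)},
\qquad
\Vert \Psi(q_1)-\Psi(q_2)\Vert^2_{L^2(0,T)}\le \ell\,C_1^2\,\Vert \delta q\Vert^2_{\mathbb{H}^1(0,T)}.
\]
Taking square roots produces exactly the constants $L_1=\ell\sqrt{\ell}\,C_1/\sqrt{2}$ and $L_2=\sqrt{\ell}\,C_1$ claimed in (\ref{24}).

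If I wanted to verify (\ref{18}) rather than quote it, I would integrate the pointwise trace inequalities (\ref{10}) over $t\in(0,T)$. This bounds $\Vert \delta u(\ell,\cdot)\Vert^2_{L^2(0,T)}$ by $\frac{\ell^3}{2}\Vert \delta u_{xx}\Vert^2_{L^2(0,T;L^2(0,\ell))}$, and similarly for the slope with the factor $\ell$. The first estimate in (\ref{4}) then finishes the bound.

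The only step I expect to need care is justifying that the difference of the two weak solutions is the weak solution for the differenced data. This follows from the uniqueness of the weak solution and the linearity of the weak formulation in both $u$ and $(M,g)$. The constant $C_1$ in (\ref{5}) depends only on $T$, $\rho_0$ and $r_0$, not on $q$, so the Lipschitz constants are uniform over $\mathcal{Q}(0,T)$.
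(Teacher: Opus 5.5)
Your proposal is correct and is essentially the paper's proof. The paper also notes that $\delta u=u(\cdot,\cdot;q_1)-u(\cdot,\cdot;q_2)$ is the weak solution of problem (\ref{25}) with inputs $\delta M,\delta g$, applies the trace estimates (\ref{18}) to $\delta u$, and takes square roots to get $L_1=\ell\sqrt{\ell}\,C_1/\sqrt{2}$ and $L_2=\sqrt{\ell}\,C_1$; your appeal to linearity, uniqueness of the weak solution and $H^2(0,T)\subset H^1(0,T)$ just spells out what the paper leaves implicit.
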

{\bf Proof.} By the definition (\ref{22}) of the input-output operators,
\begin{eqnarray*}
\left. \begin{array}{ll}
\displaystyle \Phi (q_1)(t)-\Phi (q_2) (t)=\delta u(\ell,t),  \\ [8pt]
\displaystyle \Psi (q_1)(t)-\Psi (q_2)(t)=\delta u_x(\ell,t),
\end{array} \right.
\end{eqnarray*}
where $\delta u(\ell,t)=u(\ell,t;q_1)-u(\ell,t;q_2)$, with $q_1,q_2 \in \mathcal{Q}(0,T)$, is the solution of the following initial boundary value problem:
\begin{eqnarray}\label{25}
\left\{ \begin{array}{ll}
\rho(x) \delta u_{tt}+\mu(x)\delta u_{t}+ \left (r(x)\delta u_{xx}\right)_{xx} =0,~ (x,t)\in \Omega_{T};\\ [4pt]
\delta u(x,0)=\delta u_{t}(x,0)=0, \,x \in (0,\ell)\,; \\ [4pt]
\delta u(0,t)=\delta u_x(0,t)=0, ~\left(r(x)\delta u_{xx}\right)_{x=\ell}=\delta M(t),~\left (-(r(x)\delta u_{xx})_x\right)_{x=\ell}=\delta g(t),
\end{array} \right.
\end{eqnarray}
with the inputs $\delta M(t)=M_1(t)-M_2(t)$ and $\delta g(t)=g_1(t)-g_2(t)$.

We employ the trace inequalities (\ref{18}), applied to the weak solution of problem (\ref{25}), to estimate the norms:
\begin{eqnarray*}
\left. \begin{array}{ll}
\displaystyle \Vert \Phi(q_1)-\Phi(q_2) \Vert_{L^2(0,T)}=\Vert \delta u(\ell, \cdot) \Vert_{L^2(0,T)},  \\ [8pt]
\displaystyle \Vert \Psi(q_1)-\Psi(q_2) \Vert_{L^2(0,T)}=\Vert \delta u_x(\ell, \cdot) \Vert_{L^2(0,T)}
\end{array} \right.
\end{eqnarray*}
We have:
\begin{eqnarray*}
\left. \begin{array}{ll}
\displaystyle \Vert \Phi(q_1)-\Phi(q_2) \Vert_{L^2(0,T)} \le \frac{\ell \sqrt{\ell}}{\sqrt{2}}\,C_1\, \Vert \delta q \Vert_{\mathbb{H}^1(0,T)},  \\ [10pt]
\displaystyle \Vert \Psi(q_1)-\Psi(q_2) \Vert_{L^2(0,T)} \le \sqrt{\ell} \,C_1\, \Vert \delta q \Vert^2_{\mathbb{H}^1(0,T)}.
\end{array} \right.
\end{eqnarray*}
These estimates lead to the required inequalities (\ref{24}).\hfill$\Box$

%section 5
\section{The Tikhonov functional and existence of a quasi-solution}

In practice, exact equalities in the system of equations (\ref{23}) are not achievable due to random noise in the measured outputs $w_\ell(t)$ and $\theta_\ell(t)$. As a consequence, one needs to introduce the Tikhonov functional
\begin{eqnarray}\label{26}
\displaystyle \mathcal{J}(q):=\frac{1}{2} \int_0^T \left [\Phi(q)(t) -w_\ell(t) \right ]^2dt
+ \frac{1}{2} \int_0^T \left [\Psi(q)(t) -\theta_\ell(t) \right ]^2dt \qquad \qquad \qquad \nonumber \\ [6pt]
\qquad \equiv \frac{1}{2} \int_0^T \left [u(\ell,t;q) -w_\ell(t) \right ]^2dt
+ \frac{1}{2} \int_0^T \left [u_x(\ell,t;q) -\theta_\ell(t) \right ]^2dt\qquad \qquad \nonumber \\ [6pt]
=: \mathcal{J}_1(q)+ \mathcal{J}_2(q),~q \in \mathcal{Q}(0,T),\qquad  \quad
\end{eqnarray}
assuming that the measured outputs satisfy the following conditions:
\begin{eqnarray}\label{27}
w_\ell, \theta_\ell \in L^2(0,T).
\end{eqnarray}

Consider the following minimization problem for this functional:
 \begin{eqnarray}\label{28}
\mathcal{J}(q_*)=\inf_{q \in \mathcal{Q}(0,T)} \mathcal{J}(q).
\end{eqnarray}

A solution of the minimization problem (\ref{28}) is called a quasi-solution of the inverse source problem (\ref{1})-(\ref{2}), due to \cite{Ivanov:1962}.

The lemma below shows that the Lipschitz continuity property of the input-output operators yields the same property of the Tikhonov functional.

\begin{Lem}\label{Lemma-3}
Under the conditions of Theorem \ref{Theorem-1}, the Tikhonov functional (\ref{26}) is  Lipschitz continuous, that is,
\begin{eqnarray}\label{29}
\displaystyle \vert \mathcal{J}(q_1)-\mathcal{J}(q_2) \vert_{L^2(0,T)}
\leq  L_{\mathcal{J}}\, \Vert q_1-q_2 \Vert_{\mathbb{H}^1(0,T)}, ~\forall q_1,q_2 \in \mathcal{Q}(0,T),
\end{eqnarray}
where
\begin{eqnarray*}
\displaystyle L_{\mathcal{J}}=\sqrt{2}\,C_\mathcal{Q}\, \left (L_1^2+L_2^2 \right ) +L_1  \Vert w_\ell \Vert_{L^2(0,T)}+L_2 \Vert \theta_\ell \Vert_{L^2(0,T)},
\end{eqnarray*}
is the Lipschitz constant, $L_1$ and $L_2$ are the positive constants introduced in Lemma \ref{Lemma-2}.
\end{Lem}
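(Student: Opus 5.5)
Split $\mathcal{J}=\mathcal{J}_1+\mathcal{J}_2$ as in (\ref{26}) and treat each part with the elementary identity $a^2-b^2=(a-b)(a+b)$. For $\mathcal{J}_1$, set $a=\Phi(q_1)-w_\ell$ and $b=\Phi(q_2)-w_\ell$. Then
\begin{eqnarray*}
\mathcal{J}_1(q_1)-\mathcal{J}_1(q_2)=\frac{1}{2}\int_0^T \left[\Phi(q_1)-\Phi(q_2)\right]\left[\Phi(q_1)+\Phi(q_2)-2w_\ell\right]dt .
\end{eqnarray*}
By the Cauchy--Schwarz inequality,
\begin{eqnarray*}
\vert \mathcal{J}_1(q_1)-\mathcal{J}_1(q_2)\vert \le \frac{1}{2}\Vert \Phi(q_1)-\Phi(q_2)\Vert_{L^2(0,T)}\left[\Vert \Phi(q_1)\Vert_{L^2(0,T)}+\Vert \Phi(q_2)\Vert_{L^2(0,T)}+2\Vert w_\ell\Vert_{L^2(0,T)}\right].
\end{eqnarray*}
The same computation applies to $\mathcal{J}_2$, with $\Psi$ in place of $\Phi$ and $\theta_\ell$ in place of $w_\ell$.

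For the first factor, use Lemma \ref{Lemma-2}, which gives $L_1\Vert q_1-q_2\Vert_{\mathbb{H}^1(0,T)}$ for $\Phi$ and $L_2\Vert q_1-q_2\Vert_{\mathbb{H}^1(0,T)}$ for $\Psi$. The data terms then contribute $L_1\Vert w_\ell\Vert_{L^2(0,T)}+L_2\Vert \theta_\ell\Vert_{L^2(0,T)}$ to the constant, which matches the last two terms of $L_{\mathcal{J}}$.

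The bracket terms $\Vert\Phi(q_i)\Vert_{L^2(0,T)}$ and $\Vert\Psi(q_i)\Vert_{L^2(0,T)}$ need a uniform bound over $\mathcal{Q}(0,T)$. Since the forward problem is linear and $u(\cdot;0)=0$, we have $\Phi(q)=\Phi(q)-\Phi(0)$. Lemma \ref{Lemma-2}, or directly the trace estimates (\ref{18}), then gives $\Vert\Phi(q_i)\Vert_{L^2(0,T)}\le L_1\Vert q_i\Vert_{\mathbb{H}^1(0,T)}$, and likewise $\Vert\Psi(q_i)\Vert_{L^2(0,T)}\le L_2\Vert q_i\Vert_{\mathbb{H}^1(0,T)}$. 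By (\ref{20})--(\ref{21}), $\Vert q_i\Vert^2_{\mathbb{H}^1(0,T)}\le \Vert M_i\Vert^2_{H^2(0,T)}+\Vert g_i\Vert^2_{H^2(0,T)}\le 2C_\mathcal{Q}^2$, so $\Vert q_i\Vert_{\mathbb{H}^1(0,T)}\le\sqrt{2}\,C_\mathcal{Q}$. Hence
\begin{eqnarray*}
\frac{1}{2}\left(\Vert\Phi(q_1)\Vert_{L^2(0,T)}+\Vert\Phi(q_2)\Vert_{L^2(0,T)}\right)\le \sqrt{2}\,C_\mathcal{Q}L_1 .
\end{eqnarray*}
Multiplying by the first factor $L_1\Vert q_1-q_2\Vert_{\mathbb{H}^1(0,T)}$ gives the term $\sqrt{2}\,C_\mathcal{Q}L_1^2$. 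The analogous term for $\Psi$ is $\sqrt{2}\,C_\mathcal{Q}L_2^2$. Adding the two parts yields (\ref{29}) with exactly the stated $L_{\mathcal{J}}$.

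The only delicate step is justifying the uniform bound on the outputs. It relies on linearity of the forward problem, so that the trace estimates apply to $u(\cdot;q)$ itself, and on the fact that the $\mathbb{H}^1$ norm is dominated by the $\mathbb{H}^2$ norm that defines $\mathcal{Q}(0,T)$. The rest is bookkeeping of constants.
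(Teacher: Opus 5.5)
Your proof is correct and follows essentially the same route as the paper: split $\mathcal{J}=\mathcal{J}_1+\mathcal{J}_2$, factor the difference of squares, apply Cauchy--Schwarz, bound $\Vert\Phi(q_i)\Vert_{L^2(0,T)}$ and $\Vert\Psi(q_i)\Vert_{L^2(0,T)}$ via the trace estimates (\ref{18}) together with $\Vert q_i\Vert_{\mathbb{H}^1(0,T)}\le\sqrt{2}\,C_\mathcal{Q}$, and use Lemma \ref{Lemma-2} for the difference factor. Your constant bookkeeping reproduces $L_{\mathcal{J}}$ exactly, and your remark that linearity (with $u(\cdot;0)=0$) is what lets the trace estimates bound $\Phi(q_i)$ itself spells out a step the paper leaves implicit.
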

{\bf Proof.} The following inequalities are obtained using the method employed in the proof of Lemma 11.2.3 in \cite{Hasanov:Romanov:2021}:
\begin{eqnarray*}
\displaystyle \vert \mathcal{J}_1(q_1)&-&\mathcal{J}_1(q_2) \vert \quad \qquad \qquad  \quad \qquad \qquad \qquad \qquad \qquad \qquad \qquad \quad \qquad \qquad \qquad \qquad \\ [2pt]
&\leq&  \frac{1}{2}\left\vert \int_0^T \left (u(\ell,t;q_1) -w_\ell(t) \right )^2dt-\int_0^T \left (u(\ell,t;q_2) -w_\ell(t) \right )^2dt  \right\vert   \\
&\leq& \frac{1}{2} \int_0^T \left\vert \Phi(q_1)-\Phi(q_2) \right\vert \cdot \left\vert \Phi(q_1)+\Phi(q_2)-2\,w_\ell \right\vert dt\\
&\leq& \frac{1}{2} \left(\int_0^T \left\vert \Phi(q_1)-\Phi(q_2) \right\vert^2 dt\right)^{1/2}\, \left(\int_0^T \left\vert \Phi(q_1)+\Phi(q_2) -2\, w_\ell\right\vert^2 dt\right)^{1/2}\\
&\leq&  \frac{1}{2}  \Vert \Phi(q_1)-\Phi(q_2) \Vert_{L^2(0,T)} \, \left(\Vert \Phi(q_1) \Vert_{L^2(0,T)}+\Vert \Phi(q_2) \Vert_{L^2(0,T)}+ 2\,\Vert w_\ell \Vert_{L^2(0,T)}  \right) \\
&\leq&  \frac{1}{2}\,\left( \Vert \Phi(q_1) \Vert_{L^2(0,T)} +\Vert \Phi(q_2) \Vert_{L^2(0,T)} +2\,\Vert w_\ell \Vert_{L^2(0,T)}\right)\, \Vert \Phi(q_1) -\Phi(q_1)  \Vert_{L^2(0,T)}.
\end{eqnarray*}
Due to (\ref{18}),
\begin{eqnarray*}
\displaystyle \vert \mathcal{J}_1(q_1)&-&\mathcal{J}_1(q_2) \vert \quad \qquad \qquad  \quad \qquad \qquad \qquad \qquad \qquad \qquad \qquad \quad \qquad \qquad \qquad \qquad \\ [2pt]
&\leq&  \frac{1}{2}\,\left[ \frac{\ell\sqrt{\ell}}{\sqrt{2}}C_1\left(\Vert q_1 \Vert_{\mathbb{H}^1(0,T)} +\Vert q_2 \Vert_{\mathbb{H}^1(0,T)}\right) +2\,\Vert w_\ell \Vert_{L^2(0,T)}\right]\, \Vert \Phi(q_1) -\Phi(q_1)  \Vert_{L^2(0,T)}. \end{eqnarray*}

Since  $\Vert q_i \Vert_{\mathbb{H}^1(0,T)}=\left(\Vert M \Vert^2_{H^1(0,T)}+\Vert g \Vert^2_{H^1(0,T)}\right)^{1/2}\leq \sqrt{2}\, C_\mathcal{Q}$ for $i=1,2$ and   (\ref{24}),
\begin{eqnarray*}
\displaystyle \vert \mathcal{J}_1(q_1)-\mathcal{J}_1(q_2) \vert \leq  \left[L_1^2\,\sqrt{2}\,C_\mathcal{Q} +L_1\,\Vert w_\ell \Vert_{L^2(0,T)}\right]\, \Vert q_1 -q_2  \Vert_{\mathbb{H}^1(0,T)}. 
\end{eqnarray*}
Similarly,
\begin{eqnarray*}
\displaystyle \vert \mathcal{J}_2(q_1)&-&\mathcal{J}_2(q_2) \vert \quad \qquad \qquad  \quad \qquad \qquad \qquad \qquad \qquad \qquad \qquad \quad \qquad \qquad \qquad \qquad \\ [2pt]
&\leq&  \frac{1}{2}\,\left[ \Vert \Psi(q_1) \Vert_{L^2(0,T)} +\Vert \Psi(q_2) \Vert_{L^2(0,T)} +2\Vert \theta_\ell \Vert_{L^2(0,T)}\right] \Vert \Psi(q_1) -\Psi(q_1)  \Vert_{L^2(0,T)}\\
&\leq& \left[L_2^2\,\sqrt{2}\,C_\mathcal{Q} +L_2\,\Vert \theta_\ell \Vert_{L^2(0,T)}\right]\, \Vert q_1 -q_2  \Vert_{\mathbb{H}^1(0,T)}. 
\end{eqnarray*}
Finally, using (\ref{26}), proof is completed.   \hfill$\Box$\\

This property of the Tikhonov functional provides the fundamental basis for the existence of a quasi-solution to the inverse problem (\ref{1})-(\ref{2}).

\begin{thm}\label{Theorem-3}
Assume that the conditions of Theorem \ref{Theorem-1} ensuring the existence of a weak solution of the forward problem hold. Suppose, in addition, that the measured outputs satisfy the conditions (\ref{27}). Then the minimization problem (\ref{28}) for  the Tikhonov functional (\ref{26}) has at least one solution in the set of admissible inputs $\mathcal{Q}(0,T)$. Hence, the inverse source problem (\ref{1})-(\ref{2}) has a quasi-solution.
\end{thm}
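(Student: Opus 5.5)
The plan is to use the direct method of the calculus of variations, relying on the compactness of $\mathcal{Q}(0,T)$ in a weaker topology and on the continuity of $\mathcal{J}$ in that topology. Since $\mathcal{J}(q)\ge 0$, the infimum $\mathcal{J}_*:=\inf_{q\in\mathcal{Q}(0,T)}\mathcal{J}(q)$ is finite. We take a minimizing sequence $\{q^{(m)}\}=\{\langle M^{(m)},g^{(m)}\rangle\}\subset\mathcal{Q}(0,T)$ with $\mathcal{J}(q^{(m)})\to\mathcal{J}_*$. By (\ref{20}) both component sequences are bounded by $C_\mathcal{Q}$ in $H^2(0,T)$.

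First we extract a convergent subsequence. Since $H^2(0,T)$ is a Hilbert space, there is a subsequence (not relabeled) with $M^{(m)}\rightharpoonup M_*$ and $g^{(m)}\rightharpoonup g_*$ weakly in $H^2(0,T)$. The closed ball of radius $C_\mathcal{Q}$ is convex and closed, hence weakly closed. Therefore $q_*:=\langle M_*,g_*\rangle\in\mathcal{Q}(0,T)$, and by weak lower semicontinuity of the norm the bound $\Vert M_*\Vert_{H^2(0,T)}\le C_\mathcal{Q}$ (and likewise for $g_*$) is preserved. Next, the embedding $H^2(0,T)\hookrightarrow H^1(0,T)$ is compact by the Rellich--Kondrachov theorem \cite{Adams:1978}. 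Passing to a further subsequence, we therefore obtain strong convergence $\Vert q^{(m)}-q_*\Vert_{\mathbb{H}^1(0,T)}\to 0$, where the norm is the one in (\ref{5}).

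To conclude, we apply Lemma \ref{Lemma-3}:
\[
\vert \mathcal{J}(q^{(m)})-\mathcal{J}(q_*)\vert\le L_{\mathcal{J}}\,\Vert q^{(m)}-q_*\Vert_{\mathbb{H}^1(0,T)}\to 0 .
\]
This gives $\mathcal{J}(q_*)=\lim_m\mathcal{J}(q^{(m)})=\mathcal{J}_*$, so $q_*$ solves (\ref{28}) and is a quasi-solution in the sense of \cite{Ivanov:1962}. Lemma \ref{Lemma-3} requires the measured data to lie in $L^2(0,T)$, which is exactly assumption (\ref{27}); its constant $L_{\mathcal{J}}$ is uniform over $\mathcal{Q}(0,T)$, which is needed because both $q^{(m)}$ and $q_*$ must belong to that set.

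The main obstacle is that the Lipschitz estimate only gives continuity with respect to the $\mathbb{H}^1$ norm, whereas the admissible set is bounded in $\mathbb{H}^2$. The argument therefore hinges on combining weak closedness of $\mathcal{Q}(0,T)$ in $H^2$ with the compact embedding $H^2\hookrightarrow H^1$, so that the weak $H^2$-limit is attained strongly in $H^1$. An alternative route would use weak lower semicontinuity of $\mathcal{J}$ directly, via the compactness of Lemma \ref{Lemma-1}, but the Lipschitz route above is cleaner. We will also note that the forward problem is well posed for every $q\in\mathcal{Q}(0,T)$, since $H^2\subset H^1$ and the hypotheses of Theorem \ref{Theorem-1} therefore hold.
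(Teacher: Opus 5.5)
Your proof is correct and follows essentially the same route as the paper. The paper simply defers to the standard argument of Theorem 10.1.11 in \cite{Hasanov:Romanov:2021}, built on the Lipschitz continuity in Lemma \ref{Lemma-3}: $\mathcal{Q}(0,T)$ is bounded and closed in $\mathbb{H}^2$, hence compact in the $\mathbb{H}^1$ topology via $H^2(0,T)\hookrightarrow H^1(0,T)$, and $\mathcal{J}$ is $\mathbb{H}^1$-continuous there, which is exactly what your minimizing-sequence argument spells out.
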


The proof of this theorem follows the same procedure as that of Theorem 10.1.11 in \cite{Hasanov:Romanov:2021}.

%section 6
\section{Fr\'echet differentiability of the Tikhonov functional}

In this section, we introduce first the formal Fr\'{e}chet gradient formula for the Tikhonov
functional through a weak solution of the related (unique) adjoint problem corresponding to the inverse problem (\ref{1})-(\ref{2}). Then, we will establish the sufficient conditions, including the conditions imposed on the measured outputs $w_\ell(t)$ and $\theta_\ell(t)$, for the existence of this gradient. Thus, the existence of the Fr\'{e}chet gradient of the Tikhonov functional will be justified.

First, we derive an integral identity establishing a relationship between the inputs $M(t)$ and $g(t)$ and outputs $u(\ell,t;q)$ and $u_x(\ell,t;q)$, including the measured outputs $w_\ell(t)$ and $\theta_\ell(t)$, through the weak solution of the related adjoint problem.

\begin{Lem}\label{Lemma-4}
Assume that the conditions of Theorem \ref{Theorem-1} are satisfied. Then between the inputs and outputs the following integral relationship holds:
\begin{eqnarray}\label{30}
\displaystyle \int_0^T \left \{\widehat {g}(t)\delta u(\ell,t) + \widehat {M}(t)\delta u_x(\ell,t) \right \}dt= \int_0^T \left \{\delta M(t)\phi_x(\ell,t)+\delta g(t)\phi(\ell,t) \right \} dt,
\end{eqnarray}
for all $\widehat {g},\widehat {M} \in H^1(0,T)$, where $\delta M(t)$ and $\delta g(t)$ are the Neumann inputs introduced in (\ref{25}), and the function $\phi (x,t)$ is the weak solution of the following backward problem:
\begin{eqnarray}\label{31}
\left\{ \begin{array}{ll}
\rho(x) \phi_{tt}-\mu(x)\phi_{t}+ (r(x)\phi_{xx})_{xx} =0,~(x,t)\in \Omega_{T};\\ [5pt]
\phi(x,T)=\phi_{t}(x,T)=0, ~x \in (0,\ell); \\ [5pt]
\phi(0,t)=\phi_x(0,t)=0,~\left(r(x)\phi_{xx}\right)_{x=\ell}=\widehat {M}(t),\, (-\left (r(x)\phi_{xx}\right)_{x})_{x=l}=\widehat {g}(t),\,t \in [0,T],
\end{array} \right.
\end{eqnarray}
with the inputs $\widehat {g},\widehat {M} \in H^1(0,T)$, and $\delta u(x,t)$  is the weak solution of the forward problem (\ref{24}).
\end{Lem}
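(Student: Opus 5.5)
We plan to prove (\ref{30}) by the standard duality (Green's formula) argument: multiply the equation of problem (\ref{25}) for $\delta u$ by the adjoint solution $\phi$, integrate over $\Omega_T$, and move all derivatives onto $\phi$. The computation is first done for regular solutions of (\ref{25}) and (\ref{31}), that is, for data smooth enough that Theorem \ref{Theorem-2} and its backward analogue apply. The identity is then extended to weak solutions by density, using the a priori estimates of Theorem \ref{Theorem-1} and Corollary \ref{Corollary-1}.

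\emph{Time terms.} We integrate $\int_0^T\!\!\int_0^\ell \rho\,\delta u_{tt}\,\phi\,dx\,dt$ by parts twice in $t$ to obtain $\int_0^T\!\!\int_0^\ell \rho\,\delta u\,\phi_{tt}\,dx\,dt$. The boundary terms in time vanish because $\delta u(x,0)=\delta u_t(x,0)=0$ and $\phi(x,T)=\phi_t(x,T)=0$. In the same way, $\int\!\!\int \mu\,\delta u_t\,\phi$ becomes $-\int\!\!\int \mu\,\delta u\,\phi_t$. This explains the sign change of the damping term in the backward problem (\ref{31}).

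\emph{Spatial terms.} We integrate $\int_0^\ell (r\delta u_{xx})_{xx}\phi\,dx$ by parts four times in $x$. This produces
\begin{eqnarray*}
\Big[(r\delta u_{xx})_x\phi - r\delta u_{xx}\phi_x + \delta u_x\, r\phi_{xx} - \delta u\,(r\phi_{xx})_x\Big]_{x=0}^{x=\ell} + \int_0^\ell \delta u\,(r\phi_{xx})_{xx}\,dx .
\end{eqnarray*}
All terms at $x=0$ vanish, since $\delta u$, $\delta u_x$, $\phi$ and $\phi_x$ are zero there. At $x=\ell$ we substitute the Neumann data:
\begin{eqnarray*}
(r\delta u_{xx})_x=-\delta g,\quad r\delta u_{xx}=\delta M,\quad r\phi_{xx}=\widehat M,\quad (r\phi_{xx})_x=-\widehat g .
\end{eqnarray*}
The volume integrals cancel, because $\phi$ solves (\ref{31}) and $\delta u$ solves (\ref{25}). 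What remains is
\begin{eqnarray*}
0=\int_0^T\left[-\delta g\,\phi(\ell,t)-\delta M\,\phi_x(\ell,t)+\widehat M\,\delta u_x(\ell,t)+\widehat g\,\delta u(\ell,t)\right]dt,
\end{eqnarray*}
which is exactly (\ref{30}).

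\emph{Main obstacle: justification for weak solutions.} Under the conditions of Theorem \ref{Theorem-1} only, $\delta u_{tt}\in L^2(0,T;H^{-2})$, so the pointwise integrations by parts are not directly legitimate. We proceed as follows.
\begin{enumerate}
\item First we note that (\ref{31}) is well posed. The substitution $t\mapsto T-t$ turns it into a forward problem of type (\ref{1}) with damping $\mu$ and data $\widehat M(T-t)$, $\widehat g(T-t)\in H^1(0,T)$. Hence Theorem \ref{Theorem-1} gives a unique weak solution $\phi$, together with the trace estimates (\ref{18}) for $\phi(\ell,\cdot)$ and $\phi_x(\ell,\cdot)$.
\item Next we approximate $\delta M,\delta g,\widehat M,\widehat g$ in $H^1(0,T)$ by $H^2(0,T)$ functions, and $r$ (if needed) by smooth coefficients. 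Theorem \ref{Theorem-2} then provides regular solutions, for which the calculation above is rigorous. It can alternatively be carried out directly in the weak formulation, taking test functions in $\mathcal V^2(0,\ell)$ and using the duality pairing $\langle \rho\delta u_{tt},\phi\rangle$.
\item Finally we pass to the limit in each of the four boundary integrals. The trace estimates (\ref{18}) show that $u(\ell,\cdot)$ and $u_x(\ell,\cdot)$ depend continuously in $L^2(0,T)$ on the data in the $\mathbb H^1(0,T)$ norm, as in Lemma \ref{Lemma-2}. This gives convergence of both sides of (\ref{30}).
\end{enumerate}
The delicate point is ensuring that the approximating data are compatible and converge in $\mathbb H^1$. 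If approximating $r$ causes difficulty, we would instead work purely with the weak formulation in time-integrated form, where only $r\in L^\infty$ is needed.
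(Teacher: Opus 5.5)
Your argument is the paper's proof. You multiply (\ref{25}) by $\phi$, integrate by parts twice in $t$ and four times in $x$, and obtain the same boundary expression as in (\ref{32}). You then kill the endpoint terms with the initial, final and clamped conditions and substitute the four Neumann data at $x=\ell$ with exactly the paper's signs; the paper stops at this formal level and never treats weak solutions.

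Of your two justification routes, keep the direct weak-formulation one. Test the variational form of (\ref{25}) with $\phi$ and that of (\ref{31}) with $\delta u$, then subtract: the $r$-terms cancel by symmetry, and the $\rho$- and $\mu$-terms cancel after one integration by parts in $t$. The density route through Theorem~\ref{Theorem-2} is genuinely obstructed, not just delicate. A regular solution with zero initial data forces $\delta M(0)=0$ (and $\widehat M(T)=0$ for the adjoint), and $H^2$ data vanishing at an endpoint are not dense in $H^1(0,T)$, so such data cannot approximate general inputs in the $\mathbb H^1$ norm your limit step needs.
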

{\bf Proof.} Multiply both sides of equation (\ref{25}) for $ \delta u (x,t)$ by  an arbitrary function $\phi(x,t)$, integrate over $\Omega_T$ and apply the integration by parts formula several times. Next, we get:
\begin{eqnarray}\label{32}
\displaystyle \int_0^T \int_0^{\ell} \left [\rho(x)\phi_{tt}-\mu(x)\phi_{t}+\left (r(x)\phi_{xx}\right)_{xx} \right] \delta u \,dx dt \qquad~\qquad \qquad~\qquad  \qquad~\qquad  \nonumber \\ [2pt]
\displaystyle \quad + \int_0^\ell \left [\rho(x) \delta u_t \phi-\rho(x) \delta u \phi_t +\mu(x) \delta u \phi \right ]_{t=0}^{t=T} \,dx \qquad~\qquad \qquad~\qquad  \qquad~\qquad \nonumber \\ [2pt]
\displaystyle + \int_0^T \left [\left(r(x) \delta u_{xx}\right)_{x}\phi-r(x)\delta u_{xx} \phi_{x}+
r(x)\phi_{xx}\delta u_{x}-\left (r(x)\phi_{xx}\right)_{x} \delta u  \right ]_{x=0}^{x=\ell} \,dt =0.~
\end{eqnarray}

The expressions in parentheses under the integrals in (\ref{32}) suggest to us how to choose an arbitrary function $\phi (x,t)$. Namely, the first of these expressions shows that this function must satisfy the adjoint equation (\ref{31}), and the other two expressions show that this function must satisfy the limit and final conditions in (\ref{31}). Thus, we assume that the function $\phi(x,t)$ solves the backward problem (\ref{31}). Then (\ref{32}) yields:
\begin{eqnarray*}
\displaystyle \int_0^T \left [-\delta g(t)\phi(\ell,t) -\delta M(t)\phi_x(\ell,t)
+\widehat {M}(t)\delta u_x(\ell,t)+\widehat {g}(t)\delta u(\ell,t) \right ]\,dt =0,~
\end{eqnarray*}
the required integral relationship (\ref{30}) is obtained. \hfill$\Box$

\begin{Cor}\label{Corollary-2}
Assume that the inputs $\widehat {M}(t)$ and $\widehat {g}(t)$ in the backward problem (\ref{31}) are  defined as follows
\begin{eqnarray}\label{33}
\left. \begin{array}{ll}
\displaystyle \widehat {M}(t)=u_x(\ell,t;q) -\theta_\ell(t), \\ [8pt]
\displaystyle \widehat {g}(t)=u(\ell,t;q) -w_\ell(t),\, t\in (0,T).
\end{array} \right.
\end{eqnarray}
The backward problem (\ref{31}) with the inputs given by (\ref{33}), i.e. the problem
\begin{eqnarray}\label{34}
\left\{ \begin{array}{ll}
\rho(x) \varphi_{tt}-\mu(x)\varphi_{t}+ (r(x)\varphi_{xx})_{xx} =0,~(x,t)\in \Omega_{T};\\ [5pt]
\varphi(x,T)=\varphi_{t}(x,T)=0, ~x \in (0,\ell); \\ [5pt]
\varphi(0,t)=\varphi_x(0,t)=0,~\left(r(x)\varphi_{xx}\right)_{x=\ell}=u_x(\ell,t;q) -\theta_\ell(t),\\ [3pt]
\qquad \qquad \qquad \qquad \qquad \left (-(r(x)\varphi_{xx}\right)_x)_{x=\ell}=u(\ell,t;q) -w_\ell(t),\,t \in [0,T],
\end{array} \right.
\end{eqnarray}
is defined as the adjoint problem corresponding to the inverse problem (\ref{1})-(\ref{2}). Furthermore, the inputs given by (\ref{33}) transform the integral identity (\ref{30}) into the following relationship
\begin{eqnarray}\label{35}
\displaystyle \int_0^T \left \{\left [u(\ell,t;q) -w_\ell(t) \right ]\delta u(\ell,t) + \left [u_x(\ell,t;q) -\theta_\ell(t) \right ]\delta u_x(\ell,t) \right \}dt  \qquad \quad \nonumber \\ [2pt]
= \int_0^T \left [\delta M(t)\varphi_x(\ell,t;q)dt+\delta g(t)\varphi(\ell,t;q) \right ] dt,~q \in \mathcal{Q}(0,T) \qquad
\end{eqnarray}
 which is defined as the input-output  relationship corresponding to the inverse problem (\ref{1})-(\ref{2}).
\end{Cor}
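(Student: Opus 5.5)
The corollary follows from Lemma \ref{Lemma-4} once we check that the choice (\ref{33}) is an admissible pair of inputs for the backward problem (\ref{31}). Once that is done, we substitute (\ref{33}) into the identity (\ref{30}).

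\emph{Step 1: admissibility of the inputs.} Fix $q\in\mathcal{Q}(0,T)$. By Theorem \ref{Theorem-2} and Corollary \ref{Corollary-1}, estimates (\ref{18})--(\ref{19}) show that $u(\ell,\cdot;q)$ and $u_x(\ell,\cdot;q)$ belong to $H^1(0,T)$. Lemma \ref{Lemma-4} requires $\widehat M,\widehat g\in H^1(0,T)$. Hence, to make (\ref{33}) admissible, we impose on the measured data the strengthened condition $w_\ell,\theta_\ell\in H^1(0,T)$ in place of (\ref{27}); this is the "condition on the measured outputs" announced at the beginning of this section.

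\emph{Step 2: solvability of the adjoint problem (\ref{34}).} Substitute $\tau=T-t$ and set $\tilde\varphi(x,\tau)=\varphi(x,T-\tau)$. This turns (\ref{34}) into a forward problem of the form (\ref{1}): the final conditions become homogeneous initial conditions, the sign of the damping term flips, and the Neumann data are $\widehat M(T-\tau)$ and $\widehat g(T-\tau)$, which lie in $H^1(0,T)$. The damping coefficient now enters with the sign $-\mu$. We then repeat the energy argument of Theorem \ref{Theorem-1}. In that argument the term $2\int_0^t\int_0^\ell\mu\,\tilde\varphi_\tau^2$ moves to the right-hand side, and it is bounded by $2\Vert\mu\Vert_{L^\infty}\int_0^t\int_0^\ell\tilde\varphi_\tau^2/\rho_0$ times the energy. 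The Gronwall step absorbs this term, with a modified constant. This yields existence and uniqueness of a weak solution $\varphi(\cdot,\cdot;q)$, together with the trace regularity needed for $\varphi(\ell,t)$ and $\varphi_x(\ell,t)$ to make sense in $L^2(0,T)$.

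\emph{Step 3: substitution.} With $\phi=\varphi$ and the inputs (\ref{33}), the left side of (\ref{30}) becomes exactly the left side of (\ref{35}), and the right side becomes $\int_0^T[\delta M\varphi_x(\ell,t;q)+\delta g\,\varphi(\ell,t;q)]dt$. This is (\ref{35}).

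The main obstacle is Step 2. The backward problem with the reversed damping sign is not literally covered by the estimates already proved, so the energy estimate must be redone with $\mu$ treated as a lower-order perturbation controlled by Gronwall. A secondary concern is rigor in the integration by parts (\ref{32}) at the weak-solution level. We would handle it by working first with the regular weak solutions of Theorem \ref{Theorem-2}, taking smooth data, and then passing to the limit using the continuity estimates (\ref{18}).
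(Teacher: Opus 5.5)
Your route is the paper's. The corollary is just the substitution of (\ref{33}) into (\ref{30}). Your Steps 1--2 supply the same justification the paper gives later in the proof of Theorem~\ref{Theorem-4}:
\begin{itemize}
\item $u(\ell,\cdot;q),u_x(\ell,\cdot;q)\in H^1(0,T)$ by Corollary~\ref{Corollary-1};
\item $w_\ell,\theta_\ell\in H^1(0,T)$, which is exactly condition (\ref{39});
\item well-posedness of (\ref{34}) via $\tau=T-t$.
\end{itemize}

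However, Step 2 contains a sign error, and it turns a non-issue into what you call the main obstacle. With $\tilde\varphi(x,\tau)=\varphi(x,T-\tau)$ one has $\varphi_t=-\tilde\varphi_\tau$ and $\varphi_{tt}=\tilde\varphi_{\tau\tau}$. So the equation in (\ref{34}) becomes $\rho\tilde\varphi_{\tau\tau}+\mu\tilde\varphi_\tau+(r\tilde\varphi_{xx})_{xx}=0$, with homogeneous initial data and Neumann data $\widehat M(T-\tau)$, $\widehat g(T-\tau)$. This is precisely the dissipative forward form (\ref{1}), not an equation with $-\mu$; that is the reason the adjoint carries $-\mu$ in the first place.

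Consequently Theorem~\ref{Theorem-1} applies verbatim. The term $2\int_0^t\!\int_0^\ell\mu\tilde\varphi_\tau^2$ stays on the left with a good sign and can be dropped, which yields estimate (\ref{40}) with the same constant $C_1$, independent of $\mu$. This matters downstream, because Theorem~\ref{Theorem-5} applies the same $C_1$ to the adjoint difference problem (\ref{43}). Your Gronwall absorption would still give existence and uniqueness, but it would needlessly replace $C_1$ with a $\Vert\mu\Vert_{L^\infty(0,\ell)}$-dependent constant.

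Your remark about justifying the integration by parts (\ref{32}) at the weak-solution level by approximation goes beyond the paper, which carries it out formally.
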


Let now $q, q+\delta q \in \mathcal{Q}(0,T)$. Denote  by $\delta J(q):= J(q+\delta q )-J(q)$ the increment of the Tikhonov functional introduced in (\ref{26}). Then,
\begin{eqnarray}\label{36}
\displaystyle \delta \mathcal{J}(q)= \int_0^T \left \{ \left [u(\ell,t;q) -w_\ell(t) \right ]\delta u(\ell,t;q)+ \left [u_x(\ell,t;q) -\theta_\ell(t) \right ]\delta u_x(\ell,t;q)\right \}dt \qquad \nonumber \\ [6pt]
+ \frac{1}{2} \int_0^T \left (\delta u(\ell,t;q) \right )^2dt
+ \frac{1}{2} \int_0^T \left (\delta u_x(\ell,t;q) \right )^2dt\qquad
\end{eqnarray}
Comparing the first right-hand-side integral in (\ref{36}) with the left-hand-side integral in the input-output  relationship (\ref{35}) we deduce that
\begin{eqnarray*}
\displaystyle \delta \mathcal{J}(q)=\int_0^T \left [\varphi_x(\ell,t;q)\delta M(t)+\varphi(\ell,t;q)\delta g(t) \right ]dt\qquad \qquad \quad \nonumber \\ [6pt]
+ \frac{1}{2} \int_0^T \left (\delta u(\ell,t;q) \right )^2dt
+ \frac{1}{2} \int_0^T \left (\delta u_x(\ell,t;q) \right )^2dt,
\end{eqnarray*}
or
\begin{eqnarray}\label{37}
\displaystyle \delta \mathcal{J}(q)=\int_0^T \left (\varphi_x(\ell,t;q),\varphi(\ell,t;q) \right )^T  \left (\delta M(t), \delta g(t)\right ) dt\qquad \qquad \qquad \qquad  \nonumber \\ [6pt]
+ \frac{1}{2} \int_0^T \left (\delta u(\ell,t;q) \right )^2dt
+ \frac{1}{2} \int_0^T \left (\delta u_x(\ell,t;q) \right )^2dt, ~q\in \mathcal{Q}(0,T).
\end{eqnarray}
This suggests that for the Fr\'echet gradient of the Tikhonov functional $\mathcal{J}(q)$ the following \emph{formal}  gradient formula holds:
\begin{eqnarray}\label{38}
\displaystyle \nabla \mathcal{J}(q)(t)=\left (\varphi_x(\ell,t;q),\varphi(\ell,t;q) \right )^T ,~q\in \mathcal{Q}(0,T)(0,T),\, t\in (0,T).
\end{eqnarray}

Establishing the formal gradient (\ref{38}) as the vector-form gradient of the Tikhonov functional requires more than proving that the last two right-hand-side integrals in (\ref{38}) are of the order  $\mathcal{O}\left (\Vert \delta q\Vert^2_{\mathbb{H}^2(0,T)}\right)$. Crucially, the substitution (\ref{33}) must be justified. Furthermore, Theorem 1 does not guarantee the existence of a weak solution to the adjoint problem defined by (\ref{33}) since the inputs $\displaystyle \widehat {M}(t)$ and $\displaystyle \widehat {g}(t)$ does not satisfy the regularity conditions in (\ref{3}).

\begin{thm}\label{Theorem-4}
Assume that conditions of Theorem \ref{Theorem-2} ensuring the existence of a regular weak solution of the forward problem (\ref{1}) hold. Suppose that the measured outputs $w_\ell(t)$ and $\theta_\ell (t)$ satisfy the following regularity conditions
\begin{eqnarray}\label{39}
w_\ell, \theta_\ell \in H^1(0,T).
\end{eqnarray}
Then the Tikhonov functional $\mathcal{J}(q)$, defined in (\ref{26}), is Fr\'echet differentiable. Moreover, for the Fr\'echet gradient $\nabla \mathcal{J}(q)$ of this functional  gradient formula (\ref{38}) holds.
\end{thm}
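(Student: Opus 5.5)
The proof will proceed in three steps. First, justify the substitution (\ref{33}), i.e. show that the adjoint problem (\ref{34}) has a weak solution. Second, show that the remainder in (\ref{37}) is of second order in $\Vert \delta q\Vert_{\mathbb{H}^2(0,T)}$. Third, check that the map $\delta q\mapsto \int_0^T(\varphi_x(\ell,t;q)\delta M+\varphi(\ell,t;q)\delta g)\,dt$ is a bounded linear functional on $\mathbb{H}^2(0,T)$ (indeed on $\mathbb{H}^1$). Together these identify (\ref{38}) as the Fr\'echet gradient.

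\emph{Step 1: well-posedness of the adjoint problem.} For $q\in\mathcal{Q}(0,T)$ we have $M,g\in H^2(0,T)$ and $r\in H^2(0,\ell)$. By Theorem \ref{Theorem-2}, $u$ is a regular weak solution, so $u(\ell,\cdot), u_x(\ell,\cdot)\in L^2(0,T)$ with $u_t(\ell,\cdot), u_{xt}(\ell,\cdot)\in L^2(0,T)$ by (\ref{18})--(\ref{19}). Hence $u(\ell,\cdot;q),u_x(\ell,\cdot;q)\in H^1(0,T)$. Combined with (\ref{39}), the inputs $\widehat M=u_x(\ell,\cdot;q)-\theta_\ell$ and $\widehat g=u(\ell,\cdot;q)-w_\ell$ belong to $H^1(0,T)$, which is precisely the regularity required in (\ref{3}).

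Now change the time variable $\tau=T-t$ and set $\tilde\varphi(x,\tau)=\varphi(x,T-\tau)$. This turns (\ref{34}) into a forward problem of the form (\ref{1}): the damping sign flips back to $+\mu\tilde\varphi_\tau$, the initial data are homogeneous, and the Neumann inputs are $\widehat M(T-\tau)$ and $\widehat g(T-\tau)$, which are still in $H^1(0,T)$. Theorem \ref{Theorem-1} and the existence theory quoted before it therefore give a unique weak solution $\varphi$. By (\ref{18}), its traces satisfy $\varphi(\ell,\cdot),\varphi_x(\ell,\cdot)\in L^2(0,T)$, with norms bounded by $C\Vert(\widehat M,\widehat g)\Vert_{\mathbb{H}^1(0,T)}$. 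This legitimizes Lemma \ref{Lemma-4} with the choice (\ref{33}), and hence the identity (\ref{35}). The integrations by parts in (\ref{32}) need some justification here. I would perform them for smooth (e.g.\ Galerkin) approximations of $\delta u$ and $\varphi$, and then pass to the limit using the a priori estimates. This step seems to me the main technical obstacle, since $\varphi$ is only a weak solution and $\delta u$ has regular-weak regularity.

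\emph{Step 2: the linear part is bounded.} By Cauchy--Schwarz,
\[
\Big|\int_0^T(\varphi_x(\ell,t)\delta M+\varphi(\ell,t)\delta g)\,dt\Big|\le \big(\Vert\varphi_x(\ell,\cdot)\Vert_{L^2}+\Vert\varphi(\ell,\cdot)\Vert_{L^2}\big)\Vert\delta q\Vert_{\mathbb{H}^2(0,T)}.
\]
So $\nabla\mathcal{J}(q)=(\varphi_x(\ell,\cdot;q),\varphi(\ell,\cdot;q))\in L^2(0,T)^2$ defines a bounded linear functional.

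\emph{Step 3: the remainder is quadratic.} The function $\delta u$ solves (\ref{25}) with inputs $\delta M,\delta g$. By the trace estimates (\ref{18}) of Corollary \ref{Corollary-1},
\[
\frac12\Vert\delta u(\ell,\cdot)\Vert^2_{L^2(0,T)}+\frac12\Vert\delta u_x(\ell,\cdot)\Vert^2_{L^2(0,T)}\le \frac{C_1^2}{2}\Big(\frac{\ell^3}{2}+\ell\Big)\Vert\delta q\Vert^2_{\mathbb{H}^1(0,T)}\le C\,\Vert\delta q\Vert^2_{\mathbb{H}^2(0,T)}.
\]
Inserting this into (\ref{37}) gives $\mathcal{J}(q+\delta q)-\mathcal{J}(q)-\langle\nabla\mathcal{J}(q),\delta q\rangle=\mathcal{O}(\Vert\delta q\Vert^2_{\mathbb{H}^2(0,T)})$. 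This is exactly Fr\'echet differentiability, with gradient given by (\ref{38}).
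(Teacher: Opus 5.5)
Your proposal is correct and follows essentially the same route as the paper. The time reversal $\tau=T-t$ turns the adjoint problem (\ref{34}) into a forward problem whose Neumann data lie in $H^1(0,T)$ by Theorem \ref{Theorem-2}, Corollary \ref{Corollary-1} and (\ref{39}); the identity (\ref{35}) and the trace estimates then make the remainder in (\ref{37}) quadratic in $\delta q$. Using (\ref{18}) instead of (\ref{19}) gives a remainder of order $\Vert\delta q\Vert^2_{\mathbb{H}^1(0,T)}$, which is a slight sharpening. Flagging the integrations by parts in (\ref{32}), which the paper leaves formal, is a refinement rather than a different argument, and that step is routine at this regularity: since $\delta u_{tt}\in L^2(0,T;L^2(0,\ell))$, you can test the weak formulations of (\ref{25}) and (\ref{34}) against each other and integrate by parts only in $t$.
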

{\bf Proof.} Evidently, the final time adjoint problem (\ref{34}) is well-posed, as the transformation $\tau=T-t$ of the time variable shows. Further, under the conditions of Theorem \ref{Theorem-2} and the regularity conditions (\ref{39}), the Neumann inputs in the adjoint problem (\ref{34}) belong to $H^1(0,T)$, which means that conditions of Theorem \ref{Theorem-1} for this problem are satisfied. Hence by the theory developed in \cite{Baysal:2019}, there exists a unique weak solution to the adjoint problem (\ref{34}). Moreover, the a priori estimates derived in Theorem \ref{Theorem-1} hold for this solution also. In particular,
\begin{eqnarray}\label{40}
\displaystyle \Vert \varphi_{xx} \Vert^2_{L^2(0,T;L^2(0,\ell))}
\leq  C_1^2 \, \left [\Vert u_x(\ell,\,\cdot;q) -\theta_\ell \Vert^2_{H^1(0,T)}+
\Vert u(\ell,\,\cdot;q) -w_\ell \Vert^2_{H^1(0,T)}\right ].
\end{eqnarray}
In addition, the first right-hand-side integral in the first variation formula (\ref{37}) is well-defined.

Next, we consider the second and third right-hand-side integrals in formula (\ref{37}). It follows from the trace estimates (\ref{19}) applied to the regular weak solution $\delta u(x,t)$ of problem (\ref{25}) that
\begin{eqnarray}\label{41}
\left. \begin{array}{ll}
\displaystyle \Vert \delta u(\ell,\cdot) \Vert^2_{L^2(0,T)}
\leq   \frac{\ell^3 }{2}\, C_1^2 \, \Vert \delta q \Vert^2_{\mathbb{H}^2(0,T)}\,, \\ [10pt]
\displaystyle \Vert \delta u_{x} (\ell,\cdot) \Vert^2_{L^{\infty}(0,T)}
\leq \ell \,C_1^2 \, \Vert \delta q \Vert^2_{\mathbb{H}^2(0,T)}.
\end{array} \right.
\end{eqnarray}
In view of estimates (\ref{41}), formula (\ref{37}) can be rewritten as follows:
\begin{eqnarray*}
\displaystyle \delta \mathcal{J}(q)=\int_0^T \left (\varphi_x(\ell,t),\varphi(\ell,t) \right )  \left (\delta M(t), \delta g(t)\right )^T dt+\mathcal{O}\left (\Vert \delta q\Vert^2_{\mathbb{H}^2(0,T)}\right), \, q\in \mathcal{Q}(0,T).
\end{eqnarray*}
This justifies the Fr\'echet differentiability of the Tikhonov functional $\mathcal{J}(q)$, hence the gradient formula (\ref{38}).  \hfill$\Box$

%section 7
\section{The Lipschitz continuity of the Fr\'echet gradient}

It is well known that functionals with a Lipschitz continuous Fr\'echet gradient exhibit monotonicity when using gradient-based iteration algorithms. Therefore, the fact that the Fr\'echet gradient of the Tikhonov functional corresponding to any inverse problem is Lipschitz continuous is, in a sense, a natural regularizer.

\begin{thm}\label{Theorem-5}
If conditions of Theorem \ref{Theorem-2} are satisfied, then the Fr\'echet gradient of the Tikhonov functional $\mathcal{J}(q)$ corresponding to the inverse problem (\ref{1})-(\ref{2}) is Lipschitz continuous, that is
\begin{eqnarray}\label{42}
\displaystyle \Vert \nabla \mathcal{J}(q_1)-\nabla \mathcal{J}(q_2) \Vert_{\mathbb{L}^2(0,T)}
\leq  L_{\nabla}\, \Vert q_1-q_2 \Vert_{\mathbb{H}^1(0,T)}, ~\forall q_1,q_2 \in \mathcal{Q}(0,T),
\end{eqnarray}
where $\displaystyle L_{\nabla}=\left (\ell+ \frac {\ell^3}{2}\right ) C_1^2$  and
\begin{eqnarray*}
 \Vert \nabla \mathcal{J}(q_1)-\nabla \mathcal{J}(q_2) \Vert_{\mathbb{L}^2(0,T)}^2:= \Vert \delta \varphi_x(\ell,\cdot) \Vert^2_{L^2(0,T)}+\Vert \delta \varphi(\ell,\cdot) \Vert^2_{L^2(0,T)}.\\
\end{eqnarray*}
is the Lipschitz constant, $C_1>0$ is the constant introduced in Theorem \ref{Theorem-1}.
\end{thm}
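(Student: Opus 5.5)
The plan is to express the gradient increment through an adjoint problem driven by the output increment, then chain two applications of the a priori trace estimates of Corollary \ref{Corollary-1}: one for the adjoint problem and one for the forward problem (\ref{25}). By the gradient formula (\ref{38}) of Theorem \ref{Theorem-4}, for $q_1,q_2\in\mathcal{Q}(0,T)$ we have $\nabla\mathcal{J}(q_1)-\nabla\mathcal{J}(q_2)=(\delta\varphi_x(\ell,\cdot),\delta\varphi(\ell,\cdot))$, where $\delta\varphi:=\varphi(\cdot,\cdot;q_1)-\varphi(\cdot,\cdot;q_2)$. Subtracting the two adjoint problems (\ref{34}), the measured data $w_\ell,\theta_\ell$ cancel. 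Hence $\delta\varphi$ solves the backward problem (\ref{31}) with Neumann inputs
\[
\widehat{M}(t)=\delta u_x(\ell,t),\qquad \widehat{g}(t)=\delta u(\ell,t),
\]
where $\delta u$ is the solution of (\ref{25}) with $\delta q=q_1-q_2$.

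First step: the change of variable $\tau=T-t$ turns (\ref{31}) into a forward problem of type (\ref{1}). Its damping term has the opposite sign, but that term was discarded in the energy inequality (\ref{13}) anyway, so the a priori bounds still hold provided the sign is handled by the same Gronwall argument. I would then apply the trace estimates (\ref{18}) to $\delta\varphi$ and add them:
\[
\Vert \delta\varphi(\ell,\cdot)\Vert^2_{L^2(0,T)}+\Vert \delta\varphi_x(\ell,\cdot)\Vert^2_{L^2(0,T)}\le \Big(\ell+\frac{\ell^3}{2}\Big)C_1^2\Big[\Vert \delta u_x(\ell,\cdot)\Vert^2_{H^1(0,T)}+\Vert \delta u(\ell,\cdot)\Vert^2_{H^1(0,T)}\Big].
\]
The inputs are admissible for Theorem \ref{Theorem-1}, since under the conditions of Theorem \ref{Theorem-2} the traces $\delta u(\ell,\cdot)$ and $\delta u_x(\ell,\cdot)$ lie in $H^1(0,T)$.

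Second step: bound the bracket by the increment $\delta q$. The $L^2$ parts are controlled by (\ref{18}) with $\Vert\delta q\Vert_{\mathbb{H}^1(0,T)}$. For the time-derivative parts $\delta u_t(\ell,\cdot)$ and $\delta u_{xt}(\ell,\cdot)$, I would use (\ref{19}), which arises from the energy identity for $\delta u_t$ with inputs $(\delta M',\delta g')$. Summing the two pieces gives the bracket bounded by $(\ell+\ell^3/2)C_1^2$ times a norm of $\delta q$. Taking square roots and combining with the first step yields (\ref{42}) with $L_\nabla=(\ell+\ell^3/2)C_1^2$, which is exactly the product of the two constants $\sqrt{\ell+\ell^3/2}\,C_1$ coming from the two trace estimates.

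Main obstacle: the time-derivative parts of the trace norms. Estimate (\ref{19}) is stated with $\Vert\delta q\Vert_{\mathbb{H}^2(0,T)}$, whereas (\ref{42}) needs $\Vert\delta q\Vert_{\mathbb{H}^1(0,T)}$. My first attempt would be to avoid the integration by parts in time in (\ref{8})--(\ref{9}) when deriving the adjoint estimate. Instead, I would keep the boundary terms in the form $\int_0^t\widehat{M}\,\delta\varphi_{x\tau}(\ell,\tau)\,d\tau+\int_0^t\widehat g\,\delta\varphi_\tau(\ell,\tau)\,d\tau$ and try to close the estimate using only $L^2$ norms of the inputs, which are then bounded through (\ref{18}) in terms of $\Vert\delta q\Vert_{\mathbb{H}^1(0,T)}$. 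If that fails, the fallback is to follow the structure of the paper's constants: apply the adjoint trace bound and the forward trace bounds (\ref{18}) directly, measure the difference in the $\mathbb{H}^1(0,T)$ norm on the bounded set $\mathcal{Q}(0,T)$, and check that the resulting constant is $(\ell+\ell^3/2)C_1^2$.
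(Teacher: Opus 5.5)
Your first step is fine: $\delta\varphi$ solves (\ref{43}), and Theorem~\ref{Theorem-1} together with (\ref{10}) bounds $\Vert\delta\varphi(\ell,\cdot)\Vert^2_{L^2(0,T)}+\Vert\delta\varphi_x(\ell,\cdot)\Vert^2_{L^2(0,T)}$ by $(\ell+\ell^3/2)C_1^2\bigl(\Vert\delta u(\ell,\cdot)\Vert^2_{H^1(0,T)}+\Vert\delta u_x(\ell,\cdot)\Vert^2_{H^1(0,T)}\bigr)$. (The substitution $\tau=T-t$ turns $-\mu\varphi_t$ into $+\mu\psi_\tau$, so the damping stays dissipative and there is no sign issue.)

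\textbf{The gap.} You never get from this bound to $\Vert q_1-q_2\Vert_{\mathbb{H}^1(0,T)}$. Done with (\ref{18})--(\ref{19}), your second step only gives $\Vert\nabla\mathcal{J}(q_1)-\nabla\mathcal{J}(q_2)\Vert_{\mathbb{L}^2(0,T)}\le\sqrt{2}\,(\ell+\ell^3/2)C_1^2\Vert\delta q\Vert_{\mathbb{H}^2(0,T)}$, because (\ref{19}) costs second time derivatives of $\delta q$.

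Your first rescue cannot work. The boundary terms $\int_0^t\widehat{M}\,\varphi_{x\tau}(\ell,\tau)\,d\tau$ and $\int_0^t\widehat{g}\,\varphi_\tau(\ell,\tau)\,d\tau$ contain velocity traces at $x=\ell$, and the energy $\int_0^\ell(\rho\varphi_t^2+r\varphi_{xx}^2)\,dx$ does not control them. In fact the estimate you want is false. Take $\ell=\rho=r=1$, $\mu=0$, $g=0$ in (\ref{1}) (the adjoint is the same after time reversal), and a resonant moment $M(t)=\sin(\omega_N t)$ with $\omega_N$ a clamped--free eigenfrequency. The $N$-th modal coefficient grows like $t$, and the $L^2$-normalized mode $X_N$ has $|X_N'(1)|\approx 2\sqrt{\omega_N}$. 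Hence, for large $N$, $\Vert u_{xx}\Vert_{L^2(\Omega_T)}\ge c\sqrt{\omega_N}\,T^{3/2}$ with $c>0$ independent of $N$, while $\Vert M\Vert_{L^2(0,T)}\le\sqrt{T}$.

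Your fallback does not help either. Boundedness of $\mathcal{Q}(0,T)$ bounds $\Vert\delta q\Vert_{\mathbb{H}^2(0,T)}$ by a constant, but gives no linear control of it by $\Vert\delta q\Vert_{\mathbb{H}^1(0,T)}$.

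\textbf{Comparison with the paper.} The paper's proof is exactly your fallback. It applies (\ref{10}) to $\delta\varphi$. It then applies ``the first estimate in (\ref{4})'' to (\ref{43}), but with $\Vert\delta u_x(\ell,\cdot)\Vert^2_{L^2(0,T)}+\Vert\delta u(\ell,\cdot)\Vert^2_{L^2(0,T)}$ on the right. Finally it applies (\ref{10}) and (\ref{4}) once more for $\delta u$, which is where $L_\nabla=(\ell+\ell^3/2)C_1^2$ comes from.

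The middle step replaces the $H^1(0,T)$ norms required by Theorem~\ref{Theorem-1} (and used in the paper's own (\ref{40})) by $L^2(0,T)$ norms. That is an $L^2$-data energy bound of precisely the kind the example rules out. The failure persists for the particular data $\delta u_x(\ell,\cdot)$, $\delta u(\ell,\cdot)$: a resonant $\delta M$ produces an $O(1)$ resonant tip slope, which in turn drives the adjoint resonantly. So matching the paper's constant this way would inherit an unjustified step rather than close your gap. What the energy estimates honestly give is the $\mathbb{H}^2$ bound above.

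\textbf{What an $\mathbb{H}^1$ bound would need.} By (\ref{30}), $\nabla\mathcal{J}(q_1)-\nabla\mathcal{J}(q_2)=\mathcal{P}^*\mathcal{P}\,\delta q$, where $\mathcal{P}=(\Phi,\Psi)$ and its $L^2$-adjoint $\mathcal{P}^*$ is realized by the tip traces of (\ref{31}). So it would suffice to show that $\mathcal{P}$ is bounded on $L^2(0,T)^2$. That is a sharp estimate of tip deflection and slope by $L^2$ boundary data, and it must bypass $\Vert u_{xx}\Vert_{L^2(\Omega_T)}$: in the example the tip slope stays bounded while $u_{xx}$ blows up. It would have to come from modal or transfer-function analysis, for instance. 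It does not follow from Theorem~\ref{Theorem-1}, and it would in general give a constant different from $(\ell+\ell^3/2)C_1^2$.
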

{\bf Proof.} Denote by $\varphi(x,t;q_k)$ the weak solution of the adjoint problem (\ref{34}), corresponding to the admissible input $q_k \in \mathcal{Q}(0,T)$, $k=1,2$. Then the function $\delta \varphi(x,t):=\varphi(x,t;q_1)-\varphi(x,t;q_2)$ is the weak solution of the following problem:
\begin{eqnarray}\label{43}
\left\{ \begin{array}{ll}
\rho(x) \delta \varphi_{tt}-\mu(x)\delta \varphi_{t}+ (r(x)\delta \varphi_{xx})_{xx} =0,~(x,t)\in \Omega_{T};\\ [5pt]
\delta \varphi(x,T)=\delta \varphi_{t}(x,T)=0, ~x \in (0,\ell); \\ [5pt]
\delta \varphi(0,t)=\delta \varphi_x(0,t)=0,~\left(r(x)\delta \varphi_{xx}\right)_{x=\ell}=\delta u_x(\ell,t),\\ [3pt]
\qquad \qquad \qquad \qquad \qquad ~(-\left (r(x) \delta \varphi_{xx}\right)_x)_{x=\ell}=\delta u(\ell,t),\,t \in [0,T],
\end{array} \right.
\end{eqnarray}
where $\delta u(\ell,t)=u(x,t;q_1)-u(x,t;q_2)$ and $\delta u(x,t)$ is the regular weak solution of problem (\ref{25}). Furthermore,
\begin{eqnarray*}
\displaystyle \Vert \nabla \mathcal{J}(q_1)-\nabla \mathcal{J}(q_2) \Vert^2_{\mathbb{L}^2(0,T)}
=  \Vert \delta \varphi_x(\ell,\cdot) \Vert^2_{L^2(0,T)}+\Vert \delta \varphi(\ell,\cdot) \Vert^2_{L^2(0,T)}, ~q_1,q_2 \in \mathcal{Q}(0,T).
\end{eqnarray*}
To estimate the right-hand-side norms, we employ the trace estimates (\ref{18}) to get
\begin{eqnarray*}
\displaystyle \Vert \nabla \mathcal{J}(q_1)-\nabla \mathcal{J}(q_2) \Vert^2_{\mathbb{L}^2(0,T)}
\le \left (\ell+ \frac {\ell^3}{2}\right ) \,\Vert \delta \varphi_{xx} \Vert^2_{L^2(0,T;L^2(0,\ell))},
\end{eqnarray*}
and then use the first estimate in (\ref{4}) applied to the weak solution of problem (\ref{43}), to obtain the following inequality:
\begin{eqnarray*}
\displaystyle \Vert \nabla \mathcal{J}(q_1)-\nabla \mathcal{J}(q_2) \Vert^2_{\mathbb{L}^2(0,T)}
\le \left (\ell+ \frac {\ell^3}{2}\right ) C_1^2 \left [\Vert \delta u_x(\ell,\cdot) \Vert^2_{L^2(0,T)}+\Vert \delta u(\ell,\cdot) \Vert^2_{L^2(0,T)} \right ],
\end{eqnarray*}
In view of the trace inequalities (\ref{18}) applied to the function $\delta u(x,t)$ of problem (\ref{25}) this yields:
\begin{eqnarray*}
\displaystyle \Vert \nabla \mathcal{J}(q_1)-\nabla \mathcal{J}(q_2) \Vert^2_{\mathbb{L}^2(0,T)}
\le \left (\ell+ \frac {\ell^3}{2}\right )^2 C_1^2 \, \Vert \delta u_{xx} \Vert^2_{L^2(0,T;L^2(0,\ell))}.
\end{eqnarray*}
With the first estimate in (\ref{4}) applied to the solution $\delta u(x,t)$ of problem (\ref{25}) we arrive at the required result (\ref{42}) with $\delta q(t)= q_1(t)-q_2(t)$.
\hfill$\Box$

%%section 9
\section{Conclusion}
Based on previous studies, a new mathematical model has been developed that considers the influence of not only the shear force at the tip of the micro-cantilever but also the moment in a dynamic vibration in a micro-cantilever-sample system. Apart from Atomic Force Microscopy, these types of oscillating systems are indispensable components in the operation of various devices. Within this model, a new inverse problem has been formulated for determining the unknown shear force and the moment. This inverse problem assumes use of only the feasible and measurable outputs: deflection and slope at the tip of a micro-cantilever. This model allows for a more realistic simulation of vibration in micro-cantilever-sample contact than was previously possible. Within the weak solution theory for the forward problem, and the quasi-solution approach for the inverse problem combined with the Tikhonov functional and adjoint method, the theoretical basis for the proposed mathematical model has been developed. In this context, the vector form gradient formula has been derived which verifies the gradient formula given for the  simplified model \cite*{CHH:CCS:07}, and it has been proven that this gradient is Lipschitz continuous. The applications of these results, which form the basis for the numerical solution of the problem, will be discussed in the next study.

\section*{Acknowledgment}
The research of the first author has been supported by the Scientific and Technological Research Council of Turkey (TUBITAK) through the Incentive Program for International Scientific Publications (UBYT).\\

\end{document}